\newtheorem{theorem}{Theorem}[section]
\newtheorem{lemma}[theorem]{Lemma}%[section]
\newtheorem{remark}[theorem]{Remark}%[section]
\newtheorem{corollary}[theorem]{Corollary}%[section]
\newtheorem{proposition}[theorem]{Proposition}%[section]
\newtheorem{definition}[theorem]{Definition}
\newcommand{\bd}[1]{\begin{definition}\label{#1}\rm}
\newcommand{\ed}{\end{definition}}
\newcommand{\bt}[1]{\begin{theorem}\label{#1}}
\newcommand{\et}{\end{theorem}}
\newcommand{\bprop}[1]{\begin{proposition}\label{#1}}
\newcommand{\eprop}{\end{proposition}}
\newcommand{\bcor}[1]{\begin{corollary}\label{#1}}
\newcommand{\ecor}{\end{corollary}}
\newcommand{\D}{\displaystyle}
\newcommand{\T}{\textstyle}
\newcommand{\lra}{\longrightarrow}
\newcommand{\stack}[2]{\raisebox{-2pt} 
{\renewcommand{\arraystretch}{.01} 
\begin{tabular}{c} 
$#2$\\$\scriptscriptstyle #1$ 
\end{tabular} 
}}
\newcommand{\vp}{\varphi}
\def\1{\, {\rm I}\mskip-10mu 1} 
\renewcommand{\t}[1]{\tilde{#1}} 
\DeclareMathOperator*{\esssup}{ess\,sup}
\DeclareMathOperator*{\essinf}{ess\,inf}
\begin{document}
\title{Boundedness of the Stationary Solution to the Boltzmann Equation with 
Spatial Smearing, Diffusive Boundary Conditions, and Lions' Collision Kernel} 
\par
\author{J\"org-Uwe L\"obus
\\ Matematiska institutionen \\ 
Link\"opings universitet \\ 
SE-581 83 Link\"oping \\ 
Sverige 
}
\date{}
\maketitle
{\footnotesize
\noindent
\begin{quote}
{\bf Abstract} We investigate the Boltzmann equation with spatial smearing, 
diffusive boundary conditions, and Lions' collision kernel. Both, the 
physical as well as the velocity space, are assumed to be bounded. Existence 
and uniqueness of a stationary solution, which is a probability density, has 
been demonstrated in \cite{CPW98} under a certain smallness assumption on 
the collision term. We prove that whenever there is a stationary solution then 
it is a.e. positively bounded from below and above. 
\noindent 

{\bf AMS subject classification (2010)} primary 76P05, secondary 35Qxx

\noindent
{\bf Keywords} Boltzmann equation, stationarity, spatial smearing, diffusive 
boundary conditions, Lions' collision kernel

\end{quote}
}

\section{Introduction}\label{sec:1}
\setcounter{equation}{0} 

Over the past two decades, there has been a recurring interest in bounds on 
solutions to various forms of the Boltzmann equation. The different assumptions 
on the Boltzmann equation have led to different approaches. 

For example, in \cite{Fo01} stochastic calculus (Malliavin calculus) is used to 
establish boundedness from below of solutions to a certain form of the Boltzmann 
equation. Furthermore, in \cite{MV04} and \cite{Mo05} bounds in several function 
spaces and bounds by particular functions are derived by means of a detailed 
analysis of the collision kernel. In particular, regularity properties of the 
gain term are investigated. In \cite{BG17} and \cite{GBV09} sophisticated 
comparison principles are provided in order to establish Maxwellian bounds. 
\medskip 

The present paper uses a mathematical description of a rarefied gas in a vessel 
with diffusive boundary conditions introduced in \cite{CPW98}. The conditions on 
this form of the Boltzmann equation are physically motivated and allow to 
demonstrate the existence of a unique probability density which is a stationary 
solution to this equation. In particular, boundedness of the physical space 
$\Omega$, i. e. diam$(\Omega):=\sup\{|r_1-r_2|:r_1,r_2\in\Omega\}<\infty$, and 
strict positivity of the modulus of the velocity $v$ of a particle, i. e. $0< 
v_{min}<|v|$ imply that for all particles the free crossing time through the 
vessel is bounded by diam$(\Omega)/v_{min}$. Our analysis uses this assumption in 
(\ref{3.9}), (\ref{3.19}), and (\ref{4.28}). In addition our analysis relies 
on the physically relevant hypothesis $|v|<v_{max}<\infty$. This assumption is 
crucial for the proof of Lemma \ref{Lemma4.1} below. 

From \cite{CPW98} and other references cited and discussed in \cite{CPW98} we 
take over the presence of spatial smearing in the collision operator and the 
boundedness of the collision kernel. These features entail the existence of a 
unique stationary solution which is a probability density, as demonstrated in 
the proof of Theorem 2.2 of \cite{CPW98}. 

\medskip 
The objective of the present paper is to show that whenever a Boltzmann equation 
in a form adapted from \cite{CPW98} and \cite{Li94} has a stationary solution then 
it is a.e. positively bounded from below and above. The proof consists of two basic 
technical steps. 

The first one is to provide suitable bounds on the gain term. This is carried out 
in Section 3 by an iteration based on J.-L. Lion's regularity result in \cite{Li94}, 
Theorem IV.1 and the subsequent Remark ii). For this reason we use the particular 
form of the collision kernel in \cite{Li94}. 

The second basic technical step in order to establish bounds on the stationary 
solution $g\equiv g(y,v)$, $y\in\overline{\Omega}$, $v\in V$, refers to the flux 
$J(g)$ in physical boundary points $r\in\partial\Omega$ and the total mass $\|g 
(y,\cdot)\|_{L^1(V)}$ in physical inner points $y\in\Omega$, the dot refers to 
integration over all possible velocities. In the proof of Lemma \ref{Lemma4.1} 
below, we introduce an iteration to demonstrate that the function $\Omega\ni y 
\mapsto\|g(y,\cdot)\|_{L^1(V)}$ belongs to $L^q(\Omega)$ for all $1\le q<\infty$ 
and that $J(g)\in L^q(\partial\Omega)$ for all $1\le q<\infty$.

The results of the two basic steps then yield the a.e. positive lower and upper 
bounds on the stationary solution $g$, see Lemma \ref{Lemma4.3} and Theorem 
\ref{Theorem4.4}. 

\section{Boltzmann Equation with Spatial Smearing and Diffusive Boundary 
Conditions}\label{sec:2}
\setcounter{equation}{0}

Let $\Omega\subset{\mathbb R}^d$, $d=2,3$, be a bounded open set with smooth 
boundary, called the physical space. Furthermore, let $V:=\{v\in {\mathbb R}^d: 
0<v_{min}<|v|<v_{max}<\infty\}$ be the velocity space and let $\lambda>0$. Denote 
by $n(r)$ the outer normal at $r\in\partial\Omega$ and indicate the inner product 
in ${\mathbb R}^d$ by ``$\circ$". Following \cite{CPW98}, for $(r,v,t)\in\Omega 
\times V\times [0,\infty)$, consider the {\it Boltzmann equation} 
\begin{eqnarray*}
\frac{d}{dt}\, p(r,v,t)=-v\circ\nabla_rp(r,v,t)+\lambda Q(p,p)\, (r,v,t)
\end{eqnarray*} 
with {\it diffusive boundary conditions} 
\begin{eqnarray}\label{2.1} 
p(r,v,t)=J(r,t)(p)M(r,v)\, ,\quad r\in\partial\Omega,\ v\circ n(r)\le 0, 
\end{eqnarray} 
and initial condition $p(0,r,v):=p_0(r,v)$ or its {\it integrated (mild) version} 
\begin{eqnarray}\label{2.2} 
p(r,v,t)=S(t)\, p_0(r,v)+\lambda\int_0^t S(t-s)\, Q(p,p)\, (r,v,s)\, ds\, , 
\end{eqnarray} 
where we specify the following. 
\begin{itemize} 
\item[(i)] The {\it flux} $J$ is given by 
\begin{eqnarray*} 
J(r,t)(p)=\int_{\{v\in V:v\circ n(r)\ge 0\}}v\circ n(r)\, p(r,v,t)\, dv\, , 
\quad r\in\partial\Omega,\ t\ge 0. 
\end{eqnarray*} 
\item[(ii)] 
The function $M$ defined on the set $\{(r,v):r\in\partial\Omega,\ v\in V,\ 
v\circ n(r)\le 0\}$ is continuous, has positive lower and upper bounds, and 
satisfies 
\begin{eqnarray*} 
\int_{\{v\in V:v\circ n(r)\le 0\}}|v\circ n(r)|\, M(r,v)\, dv=1\, . 
\end{eqnarray*} 
\item[(iii)] The semigroup $S(t)$, $t\ge 0$, in $L^1(\Omega\times V)$, called 
the {\it Knudsen semigroup}, is formally the solution to the initial boundary 
value problem 
\begin{eqnarray*} 
\left(\frac{d}{dt} + v\circ\nabla_r\right)(S(t)p_0)(r,v)=0\, , 
\end{eqnarray*} 
\begin{eqnarray*} 
(S(t)p_0)(r,v)=J(r,t)(S(\cdot)p_0)M(r,v)\, ,\quad r\in \partial\Omega,\ v\circ 
n(r)\le 0. 
\end{eqnarray*} 
\item[(iv)] Denoting by $\chi$ the indicator function and setting $p:=0$ on 
$\Omega\times ({\mathbb R}^d\setminus V)\times [0,\infty)$, the {\it collision 
operator} $Q$ is given by 
\begin{eqnarray*}
&&\hspace{-.5cm}Q(p,p)(r,v,t)=\int_{\Omega}\int_V\int_{S_+^{d-1}}B(v,v_1,e) 
h_\gamma(r,y)\chi_{\{(v^\ast ,v_1^\ast)\in V\times V\}}(v,v_1,e)\times \\ 
&&\hspace{.5cm}\times\left(p(r,v^\ast ,t)p(y,v_1^\ast ,t)-p(r,v,t)p(y,v_1,t) 
\vphantom{l^1}\right)\, de\, dv_1\, dy\, .\vphantom{\int} 
\end{eqnarray*} 
Here $S^{d-1}$ is the unit sphere. In addition, $S^{d-1}_+\equiv S^{d-1}_+(v-v_1) 
:=\{e\in S^{d-1}:e\circ(v-v_1)>0\}$, $v^\ast :=v-e\circ (v-v_1)\, e$, $v_1^\ast:= 
v_1+e\circ (v-v_1)\, e$ for $e\in S^{d-1}_+$ as well as $v,v_1\in V$, and $de$ 
refers to the normalized Riemann-Lebesgue measure on $S^{d-1}_+$.  
\item[(v)] The {\it collision kernel} $B\equiv B(v,v_1,e)$ is non-negative, 
bounded, continuous on $V\times V\times S^{d-1}$, and symmetric in $v$ and 
$v_1$. It satisfies $B(v^\ast,v_1^\ast,e)=B(v,v_1,e)$ for all $v,v_1\in V$ and 
$e\in S^{d-1}_+$ for which $(v^\ast,v_1^\ast)\in V\times V$. 
\item[(vi)] The {\it smearing} function $h_\gamma$ is continuous on $\overline{ 
\Omega\times\Omega}$, is non-negative and symmetric, and vanishes for $|r-y|\ge 
\gamma>0$.  
\end{itemize} 
\begin{remark}\label{Remark2.1} 
{\rm For $t\ge 0$, let us regard 
\begin{eqnarray*}
\Omega\times V\times [0,t]\ni (r,v,s)\mapsto S(t-s)\, Q(p,p)(r,v,s) 
\end{eqnarray*} 
as a measurable function. Recall also that for every $t\ge 0$, $S(t):L^1(\Omega 
\times V)\mapsto L^1(\Omega\times V)$ has operator norm 1. Noting that $L^1( 
\Omega\times V)$ is separable, the integral in (\ref{2.2}) is a well-defined 
{\it Bochner integral} whenever } 
\begin{eqnarray*} 
\int_0^t \|Q(p,p)(\cdot,\cdot,s)\|_{L^1(\Omega\times V)}\, ds<\infty\, . 
\end{eqnarray*} 
\end{remark}
\begin{remark}\label{Remark2.2} 
{\rm We mention that the map 
\begin{eqnarray*}
{\mathbb R}^{2d}\ni (v,v_1)\mapsto (v^\ast,v_1^\ast):=\left(v-e\circ (v-v_1)\, e 
\, ,\, v_1+e\circ (v-v_1)\, e\vphantom{l^1}\right) 
\end{eqnarray*} 
has for fixed $e\in S^{d-1}_+$ an inverse which we denote by $(v^{-\ast},v_1^{- 
\ast})$. It is given by the relation 
\begin{eqnarray*}
{\mathbb R}^{2d}\ni (v,v_1)=\T\left(v^\ast-e\circ (v^\ast-v_1^\ast)\, e\, ,\, 
v_1^\ast+e\circ (v^\ast-v_1^\ast)\, e\right)\, .  
\end{eqnarray*} 
In addition we mention that the absolute value of the Jacobian determinant of the 
map $(v,v_1)\mapsto (v^\ast,v_1^\ast)$ is one. }
\end{remark} 

\section{Analysis under Lions' Assumptions on the Collision Kernel} 
\label{sec:3}
\setcounter{equation}{0}

In this section we examine the stationary solution to the equation (\ref{2.2}) 
under conditions that allow us to use results of \cite{Li94}. In particular, 
we are interested in a certain upper bound on the gain term, see Lemma 
\ref{Lemma3.4} below. Introduce $\theta:=\arccos\left(e\circ (v-v_1)/|v-v_1| 
\right)$. Since we always suppose $e\in S^{d-1}_+(v-v_1)$ we have $\theta\in [0, 
\pi/2)$. In order to be compatible with \cite{Li94}, throughout this section we 
shall suppose that for all $(v,v_1)\in V\times V$ and $e\in S^{d-1}_+(v-v_1)$ 
\begin{eqnarray}\label{3.1}
B(v,v_1,e)={\bf B}(|v-v_1|,\theta)\quad\mbox{\rm for some}\quad{\bf B}\in 
C_c^\infty((0,\infty)\times (0,\pi/2))\, , 
\end{eqnarray} 
the space of all infinitely differentiable real functions with compact 
support contained in $(0,\infty)\times (0,\pi/2)$. We mention that $B$ 
defined in this way satisfies (v) of Section \ref{sec:2}. 

According to \cite{CPW98}, Theorem 2.2, there is a $\lambda_0>0$ such 
that for $\lambda\le\lambda_0$ we have the following. There exists a 
non-negative almost everywhere on $\overline{\Omega}\times V$ defined real 
function $g\equiv g(\lambda)$ with $\|g\|_{L^1(\Omega\times V)}=1$ such 
that $g_s(\cdot,\cdot,t):=g$ is the unique non-negative stationary solution 
to (\ref{2.2}) with $\|g_s(\cdot,\cdot,t)\|_{L^1(\Omega\times V)}=1$, 
$t\ge 0$. The bound $\lambda_0$ is determined by (2.38), (2.17), and (3.9) 
of \cite{CPW98}. See also the remark after the proof of Theorem 2.2 in 
\cite{CPW98}. 

By the stationarity of $g_s(\cdot,\cdot,t)$ it is customary to write $Q(g,g) 
(r,v)$ instead of $Q(g_s,g_s)$ $(r,v,t)$, $t\ge 0$. Because of 
\begin{eqnarray*}
\int_0^t S(s)\, Q(g,g)(r,v)\, ds=\int_0^t S(t-s)\, Q(g,g)(r,v)\, ds 
\end{eqnarray*} 
we have 
\begin{eqnarray}\label{3.2}
g(r,v)=S(t)\, g(r,v)+\lambda\int_0^t S(s)\, Q(g,g)(r,v)\, ds\, ,\quad t\ge 0, 
\end{eqnarray} 
for the precise meaning see Remark \ref{Remark3.2} below. 
\medskip

Let us introduce  
\begin{eqnarray*}
\hat{B}(v,v_1):=\int_{S_+^{d-1}}B(v,v_1,e)\cdot\chi_{\{(v^\ast,v_1^\ast)\in V 
\times V\}}(v,v_1,e)\, de\, ,\quad (v,v_1)\in V\times V,   
\end{eqnarray*} 
and 
\begin{eqnarray*}
g_\gamma(r,v_1):=\int_{y\in\Omega}g(y,v_1)\, h_\gamma(r,y)\, dy\, ,\quad (r,v_1) 
\in\overline{\Omega}\times V. 
\end{eqnarray*} 
We define now 
\begin{eqnarray*}
\hat{B}_g(r,v):=\lambda\int_V\hat{B}(v,v_1)\, g_\gamma(r,v_1)\, dv_1\, ,\quad 
(r,v)\in\overline{\Omega}\times V. 
\end{eqnarray*} 
Furthermore, for $r\in\overline{\Omega}$ let us consider $g(r,\cdot)$ and 
$g_\gamma(r,\cdot)$ as functions defined on ${\mathbb R}^d$ by extending 
them by zero outside of $V$. Recalling the notation of Section \ref{sec:2}, 
in this section we shall use the decomposition $Q(g,g)=Q^+(g,g)-Q^-(g,g)$ 
of the collision operator specified by 
\begin{align}\label{3.3}
\begin{split}
&Q^+(g,g)(r,v)\vphantom{\dot{f}} \\ 
&\hspace{1.0cm}=\int_V\int_{S_+^{d-1}}B(v,v_1,e)g(r,v^\ast)g_\gamma(r,v_1^\ast) 
\chi_{\{(v^\ast,v_1^\ast)\in V\times V\}}(v,v_1,e)\, de\, dv_1 \\ 
&\hspace{1.0cm}=\int_{{\mathbb R}^d}\int_{S_+^{d-1}}B(v,v_1,e)g(r,v^\ast)g_\gamma 
(r,v_1^\ast)\, de\, dv_1\, . 
\end{split} 
\end{align}
In fact, we have $\lambda Q^-(g,g)(r,v)=g(r,v)\hat{B}_g(r,v)$ where 
\begin{eqnarray}\label{3.4}
\hat{B}_g(r,v)\le\lambda\|h_\gamma\|\|B\|\|g(\cdot,\cdot,t)\|_{L^1 
(\Omega\times V)}=\lambda\|h_\gamma\|\|B\|\, . 
\end{eqnarray} 
\begin{remark}\label{Remark3.1}
{\rm In other words, $\hat{B}_g$ is bounded on $\overline{\Omega}\times V$. 
Moreover, by (vi), the map $\overline{\Omega}\ni r\mapsto g_\gamma(r,\cdot )$ 
is bounded and uniformly continuous in $L^1(V)$. Thus by (v), $\hat{B}_g$ is 
bounded and continuous on $\overline{\Omega}\times V$. }
\end{remark}
\begin{remark}\label{Remark3.2} 
{\rm It follows from Remark \ref{Remark2.2} that $\int_\Omega\int_V Q(g,g) 
(r,v)\, dv\, dr=0$. Thus we have 
\begin{align}\label{3.5}
\begin{split}
&\hspace{-.5cm}\|S(u)Q(g,g)\|_{L^1(\Omega\times V)}\le\|Q(g,g)\|_{L^1(\Omega 
\times V)}\le 2\|Q^-(g,g)\|_{L^1(\Omega\times V)} \\ 
&\hspace{.5cm}=\frac{2}{\lambda}\|g\hat{B}_g\|_{L^1(\Omega\times V)}\le 2\| 
h_\gamma\|\|B\|<\infty\, ,\quad u\ge 0,
\end{split} 
\end{align} 
where, for the last estimate, we have taken into consideration $\|g\|_{L^1( 
\Omega\times V)}=1$ and we have applied (\ref{3.4}). Recalling Remark 
\ref{Remark2.1}, for $t\ge 0$ we may regard $\Omega\times V\times [0,t]\ni 
(r,v,u)\mapsto S(u)\, Q(g,g)(r,v,u)$ as a measurable function. By (\ref{3.5}) 
and the separability of $L^1(\Omega\times V)$ the integral in (\ref{3.2}) is 
a Bochner integral. Furthermore, according to \cite{Ja97}, Appendix C, we may 
evaluate the integral a.e. on $\Omega\times V$. }
\end{remark}

For $(r,v)\in\overline{\Omega}\times V$ we will use the notation $T_\Omega 
\equiv T_\Omega(r,v):=\inf\{s>0:r-sv\not\in\Omega\}$, the first exit time from 
$\Omega$ of $[0,\infty)\ni t\mapsto r-tv$. Observe that for $(r,v)\in\partial 
\Omega\times V$ with $v\circ n(r)>0$ we have $T_\Omega(r,v)>0$ and that for 
$(r,v)\in\partial\Omega\times V$ with $v\circ n(r)<0$ it holds that $T_\Omega 
(r,v)=0$. 
\medskip

For $(y,v)\in\overline{\Omega}\times V$ let $y^-\equiv y^-(y,v):=y-T_{\Omega} 
(y,v)\, v$. Note that $y^-\in\partial\Omega$. Likewise, for $(r,v)\in\overline 
{\Omega}\times V$ define $r^-$. Furthermore, introduce $r^+\equiv r^+(r,v):= 
r^-(r,-v)$ and observe that $r^-,r^+\in\partial\Omega$ Let us also recall the 
definition of $J$ in (i). Because of the stationarity of $g$ in the sense of 
(\ref{3.2}), $J(y,t)(g)$ is constant in the second argument $t$. We shall 
therefore write $J(y,\cdot)(g)$. 
\begin{lemma}\label{Lemma3.3} 
Let $g$ satisfy (\ref{3.2}) in the sense of Remark \ref{Remark3.2} and 
let $(r,v)\in \Omega\times V$ or $(r,v)\in\partial\Omega\times V$ such that 
$v\circ n(r)\ge 0$. Then 
\begin{eqnarray*}
1\le\psi_g(r,v,t):=\exp\left\{\int_0^t\hat{B}_g(r-sv,v)\, ds\right\}\le\sup 
\psi_g<\infty\, ,\quad t\in [0,T_{\Omega}(r,v)], 
\end{eqnarray*} 
where the supremum is taken over $\{(r,v,t):(r,v)\in\Omega\times V,\ t\in 
[0,T_{\Omega}(r,v)]\}$. Suppose $g(r,v)<\infty$. Then 
\begin{align}\label{3.6}
\begin{split}
&g(r-tv,v) \\ 
&\hspace{.5cm}=\psi_g(r,v,t)\left(-\int_0^t\frac{\lambda Q^+(g,g)(r-sv,v)} 
{\psi_g(r,v,s)}\, ds+g(r,v)\right)\, ,\quad t\in [0,T_{\Omega}(r,v)]. 
\end{split} 
\end{align} 
\end{lemma}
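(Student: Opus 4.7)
The plan is to prove the two assertions separately. For the bounds on $\psi_g$, the lower bound is immediate: since $B\ge 0$, $h_\gamma\ge 0$, and $g\ge 0$ a.e., we have $g_\gamma\ge 0$ and hence $\hat{B}_g\ge 0$, so the exponent is non-negative and $\psi_g\ge 1$. For the upper bound I use (\ref{3.4}), which gives $\hat{B}_g\le\lambda\|h_\gamma\|\|B\|$, combined with the uniform estimate $T_\Omega(r,v)\le\mathrm{diam}(\Omega)/v_{min}$ that follows from the boundedness of $\Omega$ and the lower bound $|v|\ge v_{min}$; the exponent in $\psi_g(r,v,t)$ is therefore bounded by $\lambda\|h_\gamma\|\|B\|\,\mathrm{diam}(\Omega)/v_{min}$, uniformly over $(r,v)\in\overline{\Omega}\times V$ and $t\in[0,T_\Omega(r,v)]$.

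For the representation (\ref{3.6}) my strategy is to reduce the mild form (\ref{3.2}) to a linear Volterra integral equation along the backward characteristic $s\mapsto r-sv$ and then to solve the latter by variation of constants with integrating factor $\psi_g(r,v,\cdot)^{-1}$. For $t\le T_\Omega(r,v)$ the segment $\{r-sv:s\in[0,t]\}$ stays in $\overline{\Omega}$ and never meets the boundary with ingoing velocity, so the definition of the Knudsen semigroup in item (iii) gives $(S(t)g)(r,v)=g(r-tv,v)$ and $(S(s)Q(g,g))(r,v)=Q(g,g)(r-sv,v)$ along that segment. Substituting $Q=Q^+-Q^-$ together with the identity $\lambda Q^-(g,g)=g\hat{B}_g$ into (\ref{3.2}) and rearranging yields
\begin{eqnarray*}
g(r-tv,v)=g(r,v)-\lambda\int_0^t Q^+(g,g)(r-sv,v)\,ds+\int_0^t g(r-sv,v)\hat{B}_g(r-sv,v)\,ds.
\end{eqnarray*}
Writing $u(s):=g(r-sv,v)$, this is a linear Volterra equation with bounded, measurable coefficient $s\mapsto\hat{B}_g(r-sv,v)$ (bounded by Remark~\ref{Remark3.1}) and forcing $s\mapsto-\lambda Q^+(g,g)(r-sv,v)$; differentiation of $u(s)\psi_g(r,v,s)^{-1}$ gives
\begin{eqnarray*}
\frac{d}{ds}\bigl(u(s)\,\psi_g(r,v,s)^{-1}\bigr)=-\lambda Q^+(g,g)(r-sv,v)\,\psi_g(r,v,s)^{-1},
\end{eqnarray*}
and integrating from $0$ to $t$ with $u(0)=g(r,v)$ produces precisely (\ref{3.6}).

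The main technical point I anticipate is the passage from the mild equation (\ref{3.2}), which holds only almost everywhere on $\Omega\times V$, to a pointwise equation along a specific one-dimensional characteristic. Remark~\ref{Remark3.2} guarantees that the Bochner integral in (\ref{3.2}) may be evaluated a.e.\ on $\Omega\times V$, and the continuity of $\hat{B}_g$ from Remark~\ref{Remark3.1} makes the exponent defining $\psi_g$ well defined for every $(r,v)$; one then has to argue, either by a Fubini-type absolute continuity along characteristics or by evaluating the mild form along the family of parallel characteristics with common direction $v$, that the Volterra equation above is valid for the particular $(r,v)$ under consideration whenever $g(r,v)<\infty$. The boundary case $(r,v)\in\partial\Omega\times V$ with $v\circ n(r)\ge 0$ is then handled in the same way, since $T_\Omega(r,v)>0$ and the backward characteristic immediately enters the interior of $\Omega$.
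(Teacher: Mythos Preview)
Your proposal is correct and follows essentially the same route as the paper: the bounds on $\psi_g$ come from $\hat{B}_g\ge 0$ together with (\ref{3.4}) and $T_\Omega\le\mathrm{diam}(\Omega)/v_{min}$, and the representation (\ref{3.6}) is obtained by reducing (\ref{3.2}) to the Volterra equation along the backward characteristic (the paper's (\ref{3.7})) and solving it via the integrating factor $\psi_g^{-1}$. The only cosmetic difference is that the paper phrases the last step as uniqueness for the associated linear ODE rather than explicitly differentiating $u(s)\psi_g(r,v,s)^{-1}$, but this is the same variation-of-constants argument.
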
 
\begin{proof} As already mentioned in Remark \ref{Remark3.1}, $\hat{B}_g$ is 
bounded and continuous on $\overline {\Omega}\times V$. In particular, 
$[0,T_\Omega]\ni t\mapsto\hat{B}_g(r-tv,v)$ is continuous for all $(r,v) 
\in\Omega\times V$ or $(r,v)\in\partial\Omega\times V$ with $v\circ n(r) 
\ge 0$. According to (\ref{3.2}) and Remark \ref{Remark3.2} we have 
for a.e. $(r,v)\in\Omega\times V$ and $t\in [0,T_{\Omega}]$, and hence 
also for a.e. $(r,v)\in\partial\Omega\times V$ with $v\circ n(r)\ge 0$ 
and $t\in [0,T_{\Omega}]$
\begin{align}\label{3.7}
\begin{split}
&\hspace{-.5cm}g(r-tv,v)-g(r,v)=-\int_0^t\lambda Q(g,g)(r-sv,v)\, ds \\ 
&\hspace{.5cm}=-\int_0^t\lambda Q^+(g,g)(r-sv,v)\, ds+\int_0^t\hat{B}_g 
(r-sv,v)\, g(r-sv,v)\, ds  
\end{split} 
\end{align} 
whenever $g(r,v)<\infty$. Again by the properties of $\hat{B}_g$ collected in 
Remark \ref{Remark3.1}, the related homogeneous equation $\frac{d}{dt}\varphi 
(t)=\hat{B}_g(r-tv,v)\varphi(t)$, $t\in [0,T_{\Omega}]$, with initial value 
$\varphi(0)=g(r,v)$ has the unique solution 
\begin{eqnarray}\label{3.8}
\quad\varphi(t)=g(r,v)\, \psi_g(r,v,t)\equiv g(r,v)\exp\left\{\int_0^t\hat{B}_g 
(r-sv,v)\, ds\right\}\, ,\quad  t\in [0,T_{\Omega}],  
\end{eqnarray} 
whenever $g(r,v)<\infty$. En passant we note that, by (\ref{3.4}), we have 
$1\le\psi_g$ and 
\begin{align}\label{3.9}
\begin{split}
&\hspace{-.5cm}\sup\psi_g\le\exp\left\{\sup_{(y,v)\in\overline{\Omega}\times 
V}T_\Omega(y,v)\cdot\lambda\|h_\gamma\|\|B\|\right\} \\ 
&\hspace{.5cm}\le \exp\left\{\frac{{\rm diam}(\Omega)}{v_{min}}\cdot\lambda\| 
h_\gamma\|\|B\|\right\}<\infty 
\end{split} 
\end{align} 
where the supremum on the left-hand side is taken over $\{(r,v,t):(r,v)\in\Omega 
\times V,\ t\in [0,T_{\Omega}(r,v)]\}$. 
\medskip

Now recall (\ref{3.7}) and keep in mind  uniqueness of the related homogeneous 
equation. For $(r,v)\in \Omega\times V$ or $(r,v)\in\partial\Omega\times V$ with 
$v\circ n(r)\ge 0$ there is a unique solution to 
\begin{eqnarray*}
&&f(r-tv,v)-f(r,v) \\ 
&&\hspace{.5cm}=-\int_0^t\lambda Q^+(g,g)(r-sv,v)\, ds+\int_0^t\hat{B}_g(r-sv,v) 
\, f(r-sv,v)\, ds\, , 
\end{eqnarray*} 
$t\in [0,T_{\Omega}]$, under the initial condition $f(0)=g(r,v)$ whenever $g(r,v) 
<\infty$. This solution is representable as the left-hand side as well as the 
right-hand side of (\ref{3.6}). 
\end{proof}
\medskip

\begin{lemma}\label{Lemma3.4} 
Let $g$ satisfy (\ref{3.2}) in the sense of Remark \ref{Remark3.2} and 
suppose (\ref{3.1}). We have $g_\gamma\in L^\infty(\Omega\times V)$, note also 
Remark \ref{Remark3.1}. Furthermore, there is a constant $0<c_Q<\infty$ 
independent of $(y,v)\in\Omega\times V$ such that 
\begin{eqnarray}\label{3.10}
Q^+(g,g)(y,v)\le c_Q\cdot\|g(y,\cdot)\|_{L^1(V)}
\end{eqnarray} 
for a.e. $v\in V$ whenever $\|g(y,\cdot)\|_{L^1(V)}<\infty$. 
\end{lemma}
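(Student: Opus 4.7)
The plan is to prove Lemma \ref{Lemma3.4} in two stages: first establish $g_\gamma\in L^\infty(\Omega\times V)$ by a bootstrap built on the characteristic representation (\ref{3.6}) and Lions' regularity of the gain operator, and then deduce (\ref{3.10}) as a direct consequence of Lions' bilinear estimate with one $L^1$ and one $L^\infty$ input.

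\textbf{Stage 1.} From (\ref{3.6}) with $t=T_\Omega(r,v)$ together with $\psi_g\ge 1$ I would extract the pointwise bound
\[
g(r,v)\le g(r^-,v)+\lambda\int_0^{T_\Omega(r,v)}Q^+(g,g)(r-sv,v)\,ds,
\]
where $g(r^-,v)=J(r^-,\cdot)(g)\,M(r^-,v)$ by the diffusive boundary condition (\ref{2.1}) and $M$ is bounded by (ii) of Section~\ref{sec:2}. Starting from $g\in L^1(\Omega\times V)$, I would invoke Theorem~IV.1 of \cite{Li94}: for $B$ of the form (\ref{3.1}), the bilinear gain operator $Q^+_B$ maps $L^p(V)\times L^p(V)$ continuously into a Sobolev space $H^{s(p)}(V)$ with $s(p)>0$. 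Because $V$ is bounded, Sobolev embedding then upgrades $L^p$ to $L^{p'}$ with $p'>p$; inserting this back into the characteristic estimate and integrating along the flow (with $\psi_g$ uniformly controlled by (\ref{3.9})) upgrades the integrability of $g(y,\cdot)$ in $v$ without losing the uniformity in $y\in\Omega$. A finite number of such iterations, combined with a parallel $L^p$-bound on the trace $r\mapsto J(r,\cdot)(g)$ on $\partial\Omega$ extracted from $\|g\|_{L^1(\Omega\times V)}=1$, yields $\esssup_{y\in\Omega}\|g(y,\cdot)\|_{L^\infty(V)}<\infty$, whence
\[
g_\gamma(r,v_1)\le\|h_\gamma\|\,|\Omega|\,\esssup_{y\in\Omega}\|g(y,\cdot)\|_{L^\infty(V)}
\]
gives $g_\gamma\in L^\infty(\Omega\times V)$.

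\textbf{Stage 2.} Setting $K:=\|g_\gamma\|_{L^\infty(\Omega\times V)}<\infty$, I would apply Lions' estimate once more, this time in the endpoint form
\[
\|Q^+_B(f,h)\|_{L^\infty(V)}\le C\|f\|_{L^1(V)}\|h\|_{L^\infty(V)},
\]
which is available for $B$ as in (\ref{3.1}) via Remark ii) following Theorem~IV.1 of \cite{Li94} together with the embedding $H^s(V)\hookrightarrow L^\infty(V)$ for $s>d/2$ (alternatively, via the Carleman representation of $Q^+$, whose integration kernel is bounded and uniformly $L^1$ in the second argument once $B$ is smooth and compactly supported). Taking $f=g(y,\cdot)$ and $h=g_\gamma(y,\cdot)$ gives (\ref{3.10}) with $c_Q=CK$, manifestly independent of $(y,v)$.

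\textbf{Main obstacle.} The delicate part is the Stage 1 bootstrap. One must keep all constants uniform in $y$ across iterations, control the boundary contribution $J(r^-,\cdot)(g)M(r^-,v)$ at each $L^p$-level on $\partial\Omega$ using the stationarity and the fixed total mass, and arrange for the accumulated regularity gain $s(p_1)+s(p_2)+\dots$ to eventually exceed the Sobolev threshold $d/2$ so that $L^\infty$ is reached in finitely many steps. The compact support of $\mathbf{B}$ in (\ref{3.1}), away from $\theta=0,\pi/2$ and from $|v-v_1|=0$, is precisely the ingredient that guarantees a strictly positive regularity gain per Lions step and hence termination of the bootstrap.
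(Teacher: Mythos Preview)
Your Stage~1 has a genuine circularity. You aim to prove $\esssup_{y\in\Omega}\|g(y,\cdot)\|_{L^\infty(V)}<\infty$ and then deduce $g_\gamma\in L^\infty$ from it; but $g\in L^\infty(\Omega\times V)$ is precisely the main Theorem~\ref{Theorem4.4}, whose proof rests on Lemma~\ref{Lemma4.1} (the $L^q$ bounds on $y\mapsto\|g(y,\cdot)\|_{L^1(V)}$ and on $r\mapsto J(r,\cdot)(g)$), and Lemma~\ref{Lemma4.1} in turn uses exactly the estimate (\ref{3.10}) you are trying to prove. The ``parallel $L^p$-bound on the trace $r\mapsto J(r,\cdot)(g)$ extracted from $\|g\|_{L^1(\Omega\times V)}=1$'' that you need for your pointwise-in-$y$ bootstrap is not available at this stage; only $J\in L^1(\partial\Omega)$ can be obtained directly from the mass constraint (this is Step~1 of Lemma~\ref{Lemma4.1}), and that is not enough to make the boundary term $J(r^-,\cdot)(g)M(r^-,v)$ uniformly bounded in $y$ across your iteration.

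The paper avoids this trap by never attempting to bootstrap $g(y,\cdot)$ pointwise in $y$. Instead it integrates the characteristic formula (\ref{3.6}) over $y\in\Omega$ \emph{before} iterating: from $g_\gamma(r,v)\le\|h_\gamma\|\int_\Omega g(y,v)\,dy$ one works with the single function $v\mapsto\int_\Omega g(y,v)\,dy\in L^1({\mathbb R}^d)$. After integration the boundary contribution $\int_\Omega g(y^-(y,v),v)\,dy$ is bounded by a constant using only mass conservation and the flux identity (relations (\ref{3.12})--(\ref{3.15})), with no need for any $L^p$ control on $J$. Lions' theorem is then applied to the pair $\big(\int_\Omega g(y,\cdot)\,dy,\,g^{k-1}\big)$, producing a sequence $g^k\in H^{-2+k(d-1)/2}({\mathbb R}^d)$, independent of $y$, dominating $g_\gamma(r,\cdot)$ for every $r$; after finitely many steps $g^k\in L^\infty$. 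For (\ref{3.10}) the same $g^k$ is reused: since $Q^+(g,g)(y,v)\le\t g^{k+1}(y,v)$ with $\t g^{k+1}$ built from $g(y,\cdot)\in L^1$ and $g^k\in H^l$ (large $l$), one application of Lions' norm estimate gives $\|\t g^{k+1}(y,\cdot)\|_{H^{l'}}\le C\|g^k\|_{H^l}\|g(y,\cdot)\|_{L^1}$ with $l'>d/2$, hence the $L^\infty$ bound with constant $c_Q$ independent of $y$. Your Stage~2 endpoint estimate $\|Q^+_B(f,h)\|_{L^\infty}\le C\|f\|_{L^1}\|h\|_{L^\infty}$ is morally in the same spirit, but note that a single Lions step from $L^\infty\subset H^0$ lands only in $H^{(d-1)/2}$, which for $d=3$ is $H^1\not\hookrightarrow L^\infty$; you still need the iterated $g^k$ (or an honest Carleman-kernel argument) to reach $L^\infty$.
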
  
\begin{proof} Let $S'({\mathbb R}^d)$ be the space of all tempered distributions 
on ${\mathbb R}^d$ and let $\hat{f}$ denote the Fourier transform of $f\in S'({ 
\mathbb R}^d)$. If $f\in L^1({\mathbb R}^d)$, it is given by $\hat{f}(\xi)= 
\int_{{\mathbb R}^d}e^{-ix\circ\xi}f(x)\, dx$. For $s\in {\mathbb R}$ introduce 
the Bessel potential spaces 
\begin{eqnarray*} 
H^s({\mathbb R}^d):=\left\{f\in S'({\mathbb R}^d):\hat{f}\in L^2_{\rm loc} 
({\mathbb R}^d),\ \int_{{\mathbb R}^d}(1+|\xi|^2)^s\left|\hat{f}(\xi)\right 
|^2\, d\xi<\infty\right\}\, . 
\end{eqnarray*} 
First we aim to show that for any $k\in {\mathbb N}$ there is a function 
$g^k\in H^{-2+\frac{k(d-1)}{2}}({\mathbb R}^d)$ independent of $r\in\overline 
{\Omega}$ such that $g_\gamma(r,v)\le g^k(v)$ for $(r,v)\in\overline{\Omega} 
\times V$. 
\medskip 

For a.e. $(r,v)\in\overline{\Omega}\times V$ we have by (\ref{3.6}) and $\psi_g 
\ge 1$
\begin{align}\label{3.11}
\begin{split} 
&\hspace{-.5cm}g_\gamma(r,v)=\int_{y\in\Omega}g(y,v)\, h_\gamma(r,y)\, dy \\ 
&\hspace{.5cm}\le\|h_\gamma\|\int_{y\in\Omega}g(y,v)\, dy \\ 
&\hspace{.5cm}\le\|h_\gamma\|\int_{y\in\Omega}g(y^-(y,v),v)\, dy \\ 
&\hspace{1.0cm}+\|h_\gamma\|\int_{y\in\Omega}\int_0^{T_\Omega(y,v)}\lambda Q^+ 
(g,g)(y-sv,v)\, ds\, dy\, . 
\end{split} 
\end{align}
We shall treat both items on the right-hand side of (\ref{3.11}) individually. 
By the definition of $y^-\equiv y^-(y,v)\in\partial\Omega$ in preparation of 
Lemma \ref{Lemma3.3}, we have $v\circ n(y^-)\le 0$ for all $(y,v)\in\Omega\times 
V$. In fact, $v$ points from $y^-\in\partial\Omega$ to the inner of $\Omega$ 
while $n(y^-)$ is the outer normal at $y^-\in\partial\Omega$. Here, the boundary 
conditions (\ref{2.1}) say that 
\begin{eqnarray*} 
g(y^-,v)=J(y^-,\cdot)(g)\cdot M(y^-,v)\, . 
\end{eqnarray*} 
Next we recall that according to (ii), there exist $M_{\rm min},M_{\rm 
max}\in (0,\infty)$ such that $M_{\rm min}\le M(z,v)\le M_{\rm max}$ for 
all $z\in\partial\Omega$ and all $v\in V$ with $v\circ n(z)\le 0$. We fix 
$v\in V$ for the next chain of equations and inequalities and obtain 
\begin{eqnarray*} 
&&\hspace{-.5cm}\int_{y\in\Omega}g(y^-(y,v),v)\, dy=\int_{y\in\Omega}J(y^- 
(y,v),\cdot)(g)M(y^-(y,v),v)\, dy \\ 
&&\hspace{.5cm}\le M_{\rm max}\int_{y\in\Omega}J(y^-(y,v),\cdot)(g)\, dy \\ 
&&\hspace{.5cm}=M_{\rm max}\int_{\{r\in\partial\Omega:v\circ n(r)\le 0\}} 
\int_{t\in [0,T_\Omega(r,-v)]}J(r,\cdot)(g)|v|\, dt\, \left(\frac{-v}{|v|} 
\right)\circ n(r)\, dr \\ 
&&\hspace{.5cm}=M_{\rm max}\int_{\{r\in\partial\Omega:v\circ n(r)\le 0\}} 
\int_{t\in [0,T_\Omega(r,-v/|v|)]}J(r,\cdot)(g)\, dt\, \frac{|v\circ n(r)|} 
{|v|}\, dr \\ 
&&\hspace{.5cm}=M_{\rm max}\int_{\{r\in\partial\Omega:v\circ n(r)\le 0\}} 
\, J(r,\cdot)(g)\cdot |r^+(r,v)-r|\, \frac{|v\circ n(r)|}{|v|}\, dr \\ 
&&\hspace{.5cm}\le {\rm diam}(\Omega) M_{\rm max}\int_{\{r\in\partial\Omega: 
v\circ n(r)\le 0\}}J(r,\cdot)(g)\, dr  
\end{eqnarray*} 
where in the third line we have put $y^-(y,v)=:r$ which implies that $y=
r+tv$ for some $t\in [0,T_\Omega(r,-v)]$. In particular, this substitution 
yields $dy=|v|\, dt\cdot (-v/|v|)\circ n(r)\, dr$. 

Moreover set 
\begin{eqnarray*}
C_M:=\left(M_{\rm min}\cdot\int_{\{w\in V:w\circ n(r)\le 0\}}\frac{|w 
\circ n(r)|^2}{|w|^2}\, dw\right)^{-1} 
\end{eqnarray*} 
and note that $C_M\in (0,\infty)$ is independent of $r\in\partial \Omega$. 
Taking into consideration (\ref{2.1}) we verify that 
\begin{eqnarray*} 
&&\hspace{-.5cm}J(r,\cdot)(g)\le C_MJ(r,\cdot)(g)\int_{\{w\in V:w\circ n(r) 
\le 0\}}\frac{|w\circ n(r)|^2}{|w|^2}M(r,w)\, dw \\ 
&&\hspace{.5cm}=C_M\int_{\{w\in V:w\circ n(r)\le 0\}}\frac{|w\circ n(r)|^2} 
{|w|^2}g(r,w)\, dw 
\end{eqnarray*} 
and thus 
\begin{align}\label{3.12}
\begin{split} 
&\hspace{-.5cm}\int_{y\in\Omega}g(y^-(y,v),v)\, dy \\ 
&\hspace{.5cm}\le {\rm diam}(\Omega)\cdot C_MM_{\rm max}\int_{r\in\partial 
\Omega}\int_{\{w\in V:w\circ n(r)\le 0\}}\frac{|w\circ n(r)|^2}{|w|^2}g(r,w) 
\, dw\, dr \\ 
&\hspace{.5cm}={\rm diam}(\Omega)\cdot C_MM_{\rm max}\int_{w\in V}\int_{ 
\{r\in\partial\Omega:w\circ n(r)\le 0\}}\frac{|w\circ n(r)|^2}{|w|^2}g(r,w) 
\, dr\, dw \, . 
\end{split} 
\end{align} 
if the right-hand side is finite. Keeping in mind that $\Omega$ is a bounded 
domain with smooth boundary it turns out that there is a constant $C_\Omega>0$ 
only depending on $\Omega$ such that 
\begin{eqnarray}\label{3.13}
\left|\frac{r-y}{|r-y|}\circ n(r)\right|\le C_\Omega|r-y|\quad y,r\in\partial 
\Omega. 
\end{eqnarray} 
As a consequence, we have $|w\circ n(r)|/|w|\le C_\Omega\, |r^+(r,w)-r|$ for 
all $w\in V$ and all $r\in\partial\Omega$ with $w\circ n(r)\le 0$. Thus, for 
any $w\in V$ and $r\in\partial\Omega$ such that $w\circ n(r)\le 0$ it holds that 
\begin{eqnarray*}
\frac{|w\circ n(r)|}{|w|}\, g(r,w)\le C_\Omega\int_{t=0}^{T_\Omega(r^+,w)} 
g(r,w)|w|\, dt\, .
\end{eqnarray*} 
Since $g(r,w)=g((r+tw)-tw,w)\le\sup\psi_g\, g(r+tw,w)$ for $t\in [0,T_\Omega 
(r^+,w)]$ by (\ref{3.6}), we have 
\begin{eqnarray}\label{3.14}
\frac{|w\circ n(r)|}{|w|}\, g(r,w)\le C_\Omega\sup\psi_g\int_{t=0}^{T_\Omega 
(r^+,w)}g(r+tw,w)|w|\, dt\, .
\end{eqnarray} 
This and 
\begin{eqnarray*}
&&\hspace{-.5cm}\int_{w\in V}\int_{\{r\in\partial\Omega:w\circ n(r)\le 0\}} 
\frac{|w\circ n(r)|}{|w|}\int_{t=0}^{T_\Omega(r^+,w)}g(r+tw,w)|w|\, dt\, dr 
\, dw \\ 
&&\hspace{.5cm}=\|g\|_{L^1(\Omega\times V)}=1\vphantom{\int}
\end{eqnarray*} 
imply that the right-hand side of (\ref{3.12}) is finite. Writing $\t C$ for 
$C_\Omega\cdot{\rm diam}(\Omega)\cdot\sup\psi_g\cdot C_M M_{\rm max}$ we 
deduce from (\ref{3.12}), (\ref{3.14}), and the last calculation that 
\begin{eqnarray}\label{3.15}
\int_{y\in\Omega}g(y^-(y,v),v)\, dy\le\t C\, . 
\end{eqnarray} 

In order to find an upper bound for the second item of (\ref{3.11}) we shall 
apply the main result of \cite{Li94}, namely Theorem IV.1 and Remark ii). 
Note also the reformulation in Theorem L of \cite{Lu98}. In this regard let 
us recall the particular form of $B$, $B(v,v_1,e)={\bf B}(|v-v_1|,\theta)$ for 
some ${\bf B}\in C_c^\infty((0,\infty)\times (0,\pi/2))$ where $\theta=\arccos 
\left(e\circ (v-v_1)/|v-v_1|\right)\in [0,\pi/2)$. 

By the first line of (\ref{3.11}) we have 
\begin{eqnarray}\label{3.16}
g^0:=\sup_{r\in\overline{\Omega}}g_\gamma(r,\cdot)\in L^1({\mathbb R}^d)\subseteq 
\{\hat{f}:f\in L^\infty({\mathbb R}^d)\}\subseteq H^{-2}({\mathbb R}^d) 
\end{eqnarray} 
For the last inclusion consult \cite{Ho03}, Theorem 7.9.3. Moreover,  
\begin{eqnarray}\label{3.17} 
\int_\Omega g(y,\cdot)\, dy\in L^1({\mathbb R}^d)\, . 
\end{eqnarray} 
Now extend $B$ to ${\mathbb R}^d\times {\mathbb R}^d$ by setting zero outside of 
$V\times V$ and introduce
\begin{eqnarray}\label{3.18}
\t g^1(y,v):=\lambda\int_{{\mathbb R}^d}\int_{S_+^{d-1}}B(v,v_1,e)\, g(y,v^\ast) 
g^0(v_1^\ast)\, de\, dv_1\, , 
\end{eqnarray} 
$y\in\Omega$, $v\in {\mathbb R}^d$. By (\ref{3.3}) we obtain   
\begin{align}\label{3.19}
\begin{split}
&\hspace{-.5cm}\int_{y\in\Omega}\int_0^{T_\Omega(y,v)}\lambda Q^+(g,g)(y-sv,v) 
\, ds\, dy \\ 
&\hspace{.5cm}=\frac{1}{|v|}\int_{y\in\Omega}\int_0^{T_\Omega(y,v)}\lambda Q^+ 
(g,g)(y-sv,v)|v|\, ds\, dy \\ 
&\hspace{.5cm}\le \frac{1}{|v|}\int_{y\in\Omega}\int_0^{T_\Omega(y,v)}\t g^1(y 
-sv,v)|v|\, ds\, dy \\ 
&\hspace{.5cm}\le\frac{{\rm diam}(\Omega)}{v_{min}}\int_{y\in\Omega}\t g^1(y,v) 
\, dy \\ 
&\hspace{.5cm}=\frac{\lambda\, {\rm diam}(\Omega)}{v_{min}}\int_{{\mathbb R}^d} 
\int_{S_+^{d-1}}B(v,v_1,e)\, \int_\Omega g(y,\displaystyle v^\ast)\, dy\, g^0 
(v_1^\ast)\, de\, dv_1\, . 
\end{split} 
\end{align}  
Keeping (\ref{3.16}) and (\ref{3.17}) in mind, by the just mentioned result 
of J.-L. Lions \cite{Li94}, Theorem IV.1 and the subsequent Remark ii), we may 
claim that
\begin{eqnarray}\label{3.20}
\int_{y\in\Omega}\t g^1(y,\cdot)\, dy\in H^{-2+\frac{d-1}{2}}({\mathbb R}^d)\, . 
\end{eqnarray} 
In addition, using the integration by substitution of Remark \ref{Remark2.1} and 
$B(v^\ast,v_1^\ast,e)=B(v,v_1,e)$ for all $v,v_1\in V$ and $e\in S^{d-1}_+$ for 
which $(v^\ast ,v_1^\ast)\in V\times V$, we find  
\begin{eqnarray*}
&&\hspace{-.5cm}\int_{v\in {\mathbb R}^d}\int_{y\in\Omega}\t g^1(y,v)\, dy\, dv \\ 
&&\hspace{.5cm}=\lambda\int_{{\mathbb R}^d}\int_{{\mathbb R}^d}\int_{S_+^{d-1}} 
B(v,v_1,e)\,\int_\Omega g(y,v)\, dy\, g^0(v_1)\, de\, dv_1\, dv \\ 
&&\hspace{.5cm}\le\lambda\|h_\gamma\|\|B\|<\infty\, .\vphantom{\int}
\end{eqnarray*} 
Thus (\ref{3.20}) defines an element 
\begin{eqnarray}\label{3.21}
\int_{y\in\Omega}\t g^1(y,\cdot)\, dy\in H^{-2+\frac{d-1}{2}}({\mathbb R}^d) 
\cap L^1({\mathbb R}^d)\, .  
\end{eqnarray} 

Let $C_c^\infty({\mathbb R}^d)$ denote the set of all infinitely differentiable 
real functions on ${\mathbb R}^d$ which have compact support. Set $C_1:=\| 
h_\gamma\|\cdot\t C$ and choose a non-negative $\Phi_1\in C_c^\infty ({\mathbb 
R}^d)$ with $C_1\le\Phi_1$ on $V$. With 
\begin{eqnarray*}
c_1:=\|h_\gamma\|\, {\rm diam}(\Omega)\, v_{min}^{-1} 
\end{eqnarray*} 
it follows from (\ref{3.11}), (\ref{3.15}), (\ref{3.19}), and (\ref{3.21}) that 
\begin{eqnarray*}
&&\hspace{-.5cm}g_\gamma(r,\cdot)\le\sup_{r\in \overline{\Omega}}g_\gamma(r 
,\cdot)=g^0 \\ 
&&\hspace{.5cm}\le\Phi_1+c_1\int_{y\in\Omega}\t g^1(y,\cdot)\, dy=:g^1\quad 
\mbox{\rm on $V$ and hence on ${\mathbb R}^d$}
\end{eqnarray*} 
for a.e. $r\in\overline{\Omega}$ and 
\begin{eqnarray*}
g^1\in H^{-2+\frac{d-1}{2}}({\mathbb R}^d)\cap L^1({\mathbb R}^d)\, . 
\end{eqnarray*} 
We re-iterate from (\ref{3.16}) to here replacing $g^0$ by $g^{k-1}$ and $\t 
g^1(y,v)$ by 
\begin{eqnarray}\label{3.22} 
\lambda\int_{{\mathbb R}^d}\int_{S_+^{d-1}}B(v,v_1,e)\, g(y,v^\ast)g^{k-1} 
(v_1^\ast)\, de\, dv_1=:\t g^k(y,v)\, ,
\end{eqnarray} 
$y\in\Omega$, $v\in {\mathbb R}^d$ in order to obtain 
\begin{align}\label{3.23}
\begin{split}
&\hspace{-.5cm}g_\gamma(r,\cdot)\le g^0\le g^1\le\ldots\le g^k \\ 
&\hspace{.5cm}:=\Phi_1+c_1\int_{y\in\Omega}\t g^k(y,\cdot)\, dy \in H^{-2+ 
\frac{k(d-1)}{2}}({\mathbb R}^d)\cap L^1({\mathbb R}^d)
\end{split} 
\end{align} 
for a.e. $r\in\overline{\Omega}$ and for all $k\in {\mathbb N}$. By Lions' theorem 
it also holds that 
\begin{eqnarray}\label{3.24}
\t g^1(y,\cdot)\le\ldots\le\t g^{k+1}(y,\cdot)\in H^{-2+\frac{(k+1)(d-1)}{2}} 
({\mathbb R}^d)  
\end{eqnarray} 
provided that $\|g(y,\cdot)\|_{L^1(V)}<\infty$. 
\medskip 

We mention furthermore that, for those $l=-2+k(d-1)/2$ which are non-negative 
integers, the Bessel potential space $H^{-2+\frac{k(d-1)}{2}}({\mathbb R}^d)$ 
coincides with the Sobolev space $W^{l,2}({\mathbb R}^d)$. In particular the norms 
are equivalent. In this case 
\begin{eqnarray*}
W^{l,2}({\mathbb R}^d)\subseteq L^\infty({\mathbb R}^d)\quad\mbox{\rm 
continuously if}\quad l>\frac{d}{2}\, , 
\end{eqnarray*} 
cf. \cite{Br11}, Corollary 9.13. Relation (\ref{3.23}) implies the a.e. boundedness 
of $g_\gamma$ on $\overline{\Omega}\times V$. We have proved the first statement 
of the lemma. 
\medskip

In order to show (\ref{3.10}) we use the norm estimate in \cite{Li94}, Remark 
ii) to Theorem IV.1. Let $0<C<\infty$ be the constant introduced there. Moreover, 
let $0<C_{l'}<\infty$ denote the constant from the Sobolev inequality between 
the spaces $H^{l'}({\mathbb R}^d)=W^{l',2}({\mathbb R}^d)$ and $L^\infty({\mathbb 
R}^d)$. If $\|g(y,\cdot)\|_{L^1(V)}<\infty$ then according to (\ref{3.18}) and 
(\ref{3.22})-(\ref{3.24}) the following holds. If $l':=-2+(k+1)(d-1)/2$ is a natural 
number and $l'>d/2$ then we have 
\begin{eqnarray*}
&&\hspace{-.5cm}\lambda Q^+(g,g)(y,v)\le\t g^1(y,v)\le\t g^{k+1}(y,v)\le\| 
\t g^{k+1}(y,\cdot)\|_{L^\infty({\mathbb R}^d)} \\ 
&&\hspace{.5cm}\le C_{l'}\|\t g^{k+1}(y,\cdot)\|_{H^{l'}({\mathbb R}^d)} \\ 
&&\hspace{.5cm}\le C_{l'}\, C\cdot\|g^k\|_{H^l({\mathbb R}^d)}\, \|g(y, 
\cdot)\|_{L^1(V)}\equiv\lambda c_Q\|g(y,\cdot)\|_{L^1(V)} 
\end{eqnarray*} 
for a.e. $v\in V$. 
\end{proof}

\section{Boundedness Properties}\label{sec:4}
\setcounter{equation}{0} 

In the proof of the subsequent lemma we shall use the notion of {\it narrow 
convergence} of finite measures on ${\mathbb R}^d$ in the sense of \cite{DMRT13}. 
We say that a sequence of finite measures $\mu_n$ on $({\mathbb R}^d,{\cal B} 
({\mathbb R}^d))$ converges {\it narrowly} to some finite measures $\mu$ on 
$({\mathbb R}^d,{\cal B}({\mathbb R}^d))$ if 
\begin{eqnarray*}
\lim_{n\to\infty}\int f\, d\mu_n=\int f\, d\mu
\end{eqnarray*}
for all real bounded continuous functions $f$ on ${\mathbb R}^d$. 
\begin{lemma}\label{Lemma4.1} 
Let $g$ satisfy (\ref{3.2}) in the sense of Remark \ref{Remark3.2} and suppose 
(\ref{3.1}). The function $\Omega\ni y\mapsto\|g(y,\cdot) \|_{L^1(V)}$ belongs 
to $L^q(\Omega)$ for all $1\le q<\infty$ and that $\partial\Omega\ni r\mapsto 
J(r,\cdot)(g)$ belongs to $L^q(\partial\Omega)$ for all $1\le q<\infty$.
\end{lemma}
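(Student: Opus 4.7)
The plan is to derive a Riesz-potential type integral inequality for $F(r) := \|g(r,\cdot)\|_{L^1(V)}$ and $H(z) := J(z,\cdot)(g)$, and then to bootstrap integrability via iterated Hardy--Littlewood--Sobolev estimates.

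Combining Lemma \ref{Lemma3.3} with the boundary condition (\ref{2.1}) and Lemma \ref{Lemma3.4}, I set $t = T_\Omega(r,v)$ in (\ref{3.6}) and use $\psi_g \ge 1$ together with $M \le M_{\max}$ to obtain, for a.e.\ $(r,v) \in \overline{\Omega} \times V$,
\[
g(r,v) \le M_{\max}\, H(r^-(r,v)) + \lambda c_Q \int_0^{T_\Omega(r,v)} F(r - sv)\, ds.
\]
Integrating in $v$ using $v = \alpha e$ with $\alpha \in (v_{\min}, v_{\max})$, $e \in S^{d-1}$, and exploiting the fact that $r^-(r,\alpha e) = r^-(r,e)$ depends only on the direction, I convert the boundary term to a surface integral via the Jacobian $de \leftrightarrow |(z-r)\circ n(z)|/|z-r|^d\, dz$ and the transport term to a volume integral with Riesz kernel $|r-y|^{-(d-1)}$. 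The resulting a.e.\ pointwise inequality is
\[
F(r) \le C_1 \int_{\partial\Omega} \frac{|(z-r)\circ n(z)|}{|z-r|^d}\, H(z)\, dz + C_2 \int_\Omega \frac{F(y)}{|r-y|^{d-1}}\, dy,\quad r \in \overline{\Omega}. \qquad (\star)
\]
By (\ref{3.13}) the boundary kernel is dominated by $C_\Omega/|z-r|^{d-2}$ when both arguments lie on $\partial\Omega$, which is integrable on $\partial\Omega$ for $d \in \{2,3\}$. Moreover, since $g(r,v) = J(r,\cdot)(g)\,M(r,v)$ for $v \circ n(r) \le 0$ with $M \ge M_{\min}$, integration against the incoming flux weight $|v \circ n(r)|$ yields a boundary trace estimate $H(r) \le C_3\, F(r)$.

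For the bootstrap, the base $F \in L^1(\Omega)$ is free from $\|g\|_{L^1(\Omega \times V)} = 1$. Assuming additionally $H \in L^1(\partial\Omega)$, one application of $(\star)$ combined with the Hardy--Littlewood--Sobolev estimate for the Riesz kernel $|r-y|^{-(d-1)}$ (order $1$ on $\mathbb R^d$) and the boundary kernel $|z-r|^{-(d-2)}$ (order $1$ on the $(d-1)$-dimensional manifold $\partial\Omega$) upgrades $L^p$ integrability to some $L^q$ with $1/q$ decreased by up to $1/d$, resp.\ $1/(d-1)$. Cycling between $F|_{\partial\Omega}$ and $H$ via the trace estimate, and exploiting boundedness of $\Omega$ so that arbitrarily small HLS gains are admissible, finitely many iterations place $F$ and $H$ into every $L^q$ with $q < \infty$.

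The main obstacle will be the base case $H \in L^1(\partial\Omega)$ and the rigorous justification of $H(r) \le C_3 F(r)$. Flux conservation applied to (\ref{3.2}) yields only the trivial identity (total incoming equals total outgoing flux), so $L^1$-integrability of $H$ is not immediate from (\ref{3.2}) alone. Here I expect the narrow-convergence framework introduced at the start of Section \ref{sec:4} to enter: by approximating $g$ through truncations $g \wedge N$, whose velocity marginals and boundary measures on $\mathbb R^d$ and $\partial\Omega$ are finite by construction, one obtains $N$-uniform bounds from $(\star)$ and invokes narrow lower-semicontinuity along a subsequential limit to extract the desired $L^1$ bound on $H$ together with the boundary trace of $F$. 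With these in place, the HLS iteration closes the argument.
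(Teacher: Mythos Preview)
Your overall strategy---derive a Riesz-potential inequality from (\ref{3.6}) and then bootstrap integrability---is exactly the paper's approach, and your inequality $(\star)$ is essentially the paper's (4.9) for $q=1$. Two points, however, separate your proposal from a proof.

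\textbf{The base case $H\in L^1(\partial\Omega)$.} You correctly flag this as the main obstacle, but the truncation-and-narrow-convergence idea does not work as stated: replacing $g$ by $g\wedge N$ destroys stationarity, so neither (\ref{3.6}) nor $(\star)$ is available for the truncated density, and no $N$-uniform bound emerges. The paper's argument here is direct and you are missing it: from $1=\|g\|_{L^1(\Omega\times V)}$ and (\ref{3.6}) one gets $1\ge c\int_\Omega\int_V J(y^-(y,v),\cdot)(g)\,dv\,dy$; the change of variables $v=\alpha e$, $r=y^-(y,\cdot\,e)$ turns this into $1\ge C_d\int_{\partial\Omega}J(r,\cdot)(g)\,\rho_d(r)\,dr$ with $\rho_d(r)=\int_{(\Omega)_r}|y-r|^{1-d}\,n(r)\circ(r-y)/|y-r|\,dy$ bounded below on $\partial\Omega$. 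This yields (\ref{4.3}), i.e.\ $H\in L^1(\partial\Omega)$, without any approximation. You also misread the role of narrow convergence: in the paper it appears in the \emph{iteration} step (mollifying the surface measure $(J(r,\cdot)(g))^q\,dr$ by volume densities $\gamma_n$ so that the Young-type estimate (\ref{4.7}) applies), not in the initialization.

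\textbf{The iteration.} The paper does not apply HLS to $(\star)$; instead it raises to the power $q$ first, obtaining via Jensen an inequality for $F^q$ (resp.\ $H^q$) in terms of Riesz potentials of $F^q$ and $H^q$, and then uses the bounded-domain Young bound $\sup_x\int_\Omega|y-x|^{(1-d)p}\,dy<\infty$ for $p<d/(d-1)$. Your HLS route can in principle reach the same conclusion (once the $p=1$ endpoint is handled by the same Young bound), so this is a stylistic rather than essential difference. More substantively, the paper does \emph{not} pass through the trace inequality $H\le C_3\,F|_{\partial\Omega}$; it derives a separate integral inequality (\ref{4.11}) for $H$ directly and, crucially, exploits (\ref{3.13}) to show (see (\ref{4.12})) that the boundary-to-boundary term $\int_{\partial\Omega}|r-y|^{3-d}H(y)\,dy$ is uniformly bounded for $d\le 3$. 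For the mixed term it plugs (\ref{4.9}) back into itself ((\ref{4.17})) and uses the composition rule for Riesz kernels together with (\ref{4.16}). Your trace approach would instead require controlling $F|_{\partial\Omega}$ in $L^q(\partial\Omega)$ from $(\star)$, which is doable but needs exactly this kind of boundary Riesz-potential estimate that you have not supplied.
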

\begin{proof} {\it Step 1 } We establish an iteration to prove the lemma. The 
present Step 1 is dedicated to the particular case $q=1$, the {\it initialization 
step} of the iteration. Because of $\|g\|_{L^1(\Omega\times V)}=1$ we just have 
to focus on $\partial\Omega\ni r\mapsto J(r,\cdot)(g)$. 
\medskip 

According to (\ref{2.1}) and (\ref{3.6}) we have 
\begin{align}\label{4.1}
\begin{split}
&\hspace{-.5cm}1=\int_\Omega\int_V g(y,v)\, dv\, dy \\ 
&\hspace{.5cm}\ge\frac{1}{\sup\psi_g}\int_\Omega\int_V g(y^-(y,v),v)\, dv 
\, dy \\ 
&\hspace{.5cm}\ge\frac{M_{\rm min}}{\sup\psi_g}\int_\Omega\int_VJ(y^-(y,v), 
\cdot)(g)\, dv\, dy\, . 
\end{split} 
\end{align}  
We let $v=\alpha e$ where $\alpha\in(v_{min},v_{max})$ and $e\in S^{d-1}$. 
Furthermore we denote by $l_S$ the Riemann-Lebesgue measure on $(S^{d-1}, 
{\cal B}(S^{d-1}))$.  Set 
\begin{eqnarray*}
I_V(m):=\int_{v_{min}}^{v_{max}}\alpha^{m-1}\, d\alpha\, ,\quad m\in {\mathbb 
N}. 
\end{eqnarray*}
It follows that 
\begin{eqnarray*} 
&&\hspace{-.5cm}\int_VJ(y^-(y,v),\cdot)(g)\, dv \\ 
&&\hspace{.5cm}=\int_{v_{min}}^{v_{max}}\alpha^{d-1}\int_{S^{d-1}}J(y^-(y, 
\alpha e),\cdot)(g)\, dl_S(e)\, d\alpha \\ 
&&\hspace{.5cm}\ge I_V(d)\int_{S^{d-1}}J(y^-(y,\cdot\, e), \cdot)(g)\, dl_S(e) 
\end{eqnarray*}
where we note that $y^-(y,\alpha e)\in\partial\Omega$ is independent of $\alpha 
\in (v_{min},v_{max})$ and therefore appears as $y^-(y,\cdot\, e)$ in the second 
line. 
Let us denote 
\begin{eqnarray*} 
(\partial\Omega)_y:=\{r\in\partial\Omega:\{r+a(y-r):a\in (0,1)\}\subset 
\Omega\}\, ,\quad y\in\overline{\Omega}, 
\end{eqnarray*}
and 
\begin{eqnarray*} 
(\Omega)_r:=\{y\in\Omega:\{r+a(y-r):a\in (0,1)\}\subset\Omega\}\, ,\quad 
r\in\partial\Omega\, .    
\end{eqnarray*}
We mention that for $y\in\Omega$ and $r\in\partial\Omega$ we have $r\in 
(\partial\Omega)_y$ if and only if $y\in(\Omega)_r$. Observe also that for 
$e\in S^{d-1}$, $y\in\Omega$, and $r:=y^-(y,\cdot\, e)$ we have $r\in 
(\partial\Omega)_y$, $e=(y-r)/|y-r|$, and 
\begin{eqnarray*} 
dl_S(e)=|y-r|^{1-d}\, dr\cdot n(r)\circ (-e)=|y-r|^{1-d}\cdot\frac{n(r)\circ 
(r-y)}{|y-r|}\, dr\, . 
\end{eqnarray*}
Summarizing the preparations from relation (\ref{4.1}) to here, we obtain 
\begin{align}\label{4.2}
\begin{split}
&\hspace{-.5cm}1\ge\frac{M_{\rm min}\cdot I_V(d)}{\sup\psi_g}\int_\Omega 
\int_{S^{d-1}}J(y^-(y,\cdot\, e), \cdot)(g)\, dl_S(e)\, dy \\ 
&\hspace{.5cm}=\frac{M_{\rm min}\cdot I_V(d)}{\sup\psi_g}\int_{y\in\Omega} 
\int_{r\in(\partial\Omega)_y}|y-r|^{1-d}\cdot\frac{n(r)\circ (r-y)}{|y-r|}J(r, 
\cdot)(g)\, dr\, dy \\ 
&\hspace{.5cm}=\frac{M_{\rm min}\cdot I_V(d)}{\sup\psi_g}\int_{r\in\partial 
\Omega}\int_{y\in(\Omega)_r}|y-r|^{1-d}\cdot\frac{n(r)\circ (r-y)}{|y-r|}J(r, 
\cdot)(g)\, dy\, dr \\ 
&\hspace{.5cm}=C_d\int_{\partial\Omega}J(r,\cdot)(g)\rho_d(r)\, dr\, , 
\end{split} 
\end{align} 
where $C_d:=M_{\rm min}\cdot I_V(d)/\sup\psi_g$ and 
\begin{eqnarray*} 
\rho_d(r):=\int_{(\Omega)_r}|y-r|^{1-d}\cdot\frac{n(r)\circ (r-y)}{|y-r|}\, dy\, . 
\end{eqnarray*}
There is some $c_d>0$, which depends on $\Omega$ but not on $r\in\partial\Omega$, 
such that $\infty>\rho_d(r)\ge c_d$ for all $r\in\partial\Omega$. Thus from 
(\ref{4.2}) we may conclude that 
\begin{eqnarray}\label{4.3}
\int_{\partial\Omega}J(r,\cdot)(g)\, dr\le\frac{1}{c_d\, C_d}\, . 
\end{eqnarray}

\noindent 
{\it Step 2 } In this step we assume that $\Omega\ni y\mapsto\|g(y,\cdot) \|_{L^1 
(V)}$ belongs to $L^q(\Omega)$ and that $\partial\Omega\ni r\mapsto J(r,\cdot)(g)$ 
belongs to $L^q(\partial\Omega)$ for some $1\le q<\infty$. It is our aim to show 
that $\Omega\ni y\mapsto\|g(y,\cdot)\|_{L^1(V)}$ belongs to $L^{pq}(\Omega)$ and 
that $\partial\Omega\ni r\mapsto J(r,\cdot)(g)$ belongs to $L^{pq}(\partial\Omega)$ 
for all $1\le p<d/(d-1)$. In other words, the present Step 2 is the {\it execution 
step} of the iteration.
\medskip 

According to (\ref{3.6}) it holds with an appropriate constant $0<c_q<\infty$ that 
\begin{align}\label{4.4}
\begin{split}
&\hspace{-.5cm}\|g(y,\cdot)\|^q_{L^1(V)}\le c_q\int_V (g(y^-(y,v),v))^q\, dv 
 \\ 
&\hspace{1.0cm}+c_q\left(\int_V\int_0^{T_\Omega}\lambda Q^+(g,g)(y-sv,v)\, ds 
\, dv\right)^q\quad\mbox{\rm for a.e. }y\in \Omega 
\end{split} 
\end{align}
where we have taken into consideration that $\psi_g\ge 1$. The two integrals on 
the right-hand side are now treated separately. For this we let $v=\alpha e$ where 
$\alpha\in(v_{min},v_{max})$ and $e\in S^{d-1}$. We recall that $l_S$ denotes the 
Riemann-Lebesgue measure on $(S^{d-1},{\cal B}(S^{d-1}))$. We have 
\begin{eqnarray*} 
&&\hspace{-.5cm}\int_V (g(y^-(y,v),v))^q\, dv \\ 
&&\hspace{.5cm}=\int_{v_{min}}^{v_{max}}\alpha^{d-1}\int_{S^{d-1}}(J(y^-(y,\alpha e), 
\cdot)(g))^q (M(y^-,\alpha e))^q\, dl_S(e)\, d\alpha \\ 
&&\hspace{.5cm}\le M_{\rm max}^qI_V(d)\int_{S^{d-1}}(J(y^-(y,\cdot\, e),\cdot)(g))^q 
\, dl_S(e) 
\end{eqnarray*}
where we note again that $y^-(y,\alpha e)\in\partial\Omega$ is independent of 
$\alpha\in(v_{min},v_{max})$ and therefore appears as $y^-(y,\cdot\, e)$ in the 
last line. We obtain 
\begin{align}\label{4.5}
\begin{split} 
&\hspace{-.5cm}\int_V (g(y^-(y,v),v))^q\, dv \\ 
&\hspace{.5cm}\le M_{\rm max}^qI_V(d)\int_{r\in (\partial\Omega)_y}|y-r|^{1-d}\cdot 
\frac{n(r)\circ (r-y)}{|y-r|}(J(r,\cdot)(g))^q\, dr \\ 
&\hspace{.5cm}\le M_{\rm max}^qI_V(d)\int_{r\in\partial\Omega}|y-r|^{1-d}\, (J(r, 
\cdot)(g))^q\, dr\, . 
\end{split} 
\end{align}

Introduce
\begin{eqnarray*} 
f(s):=\left\{ 
\begin{array}{cl}
\D\exp\left\{-1/\left(1-s^2\vphantom{l^1}\right)\vphantom{\dot{f}}\right\}\, , 
& s\in\left(-1,1\right), \\ 
0\, , & s\in {\mathbb R}\setminus\left(-1,1\right),\vphantom{\D\dot{f}} 
\end{array} 
\right. , 
\end{eqnarray*} 
$m_d:=\left(\int_{{\mathbb R}^d} f(|x|)\, dx\right)^{-1}$, and the mollifier 
function $\psi_d(x):=m_d\cdot f(|x|)$, $x\in {\mathbb R}^d$. Define 
\begin{eqnarray*} 
\gamma_n(x):=n^d\int_{r\in\partial\Omega}\psi_d(n(x-r))(J(r,\cdot)(g))^q\, dr 
\, ,\quad x\in {\mathbb R}^d,\ n\in {\mathbb N}.  
\end{eqnarray*}
We observe that the sequence of measures $\Gamma_n$, $n\in {\mathbb N}$, given 
by $\Gamma_n(A):=\int_A\gamma_n(x)\, dx$, $A\in {\cal B}({\mathbb R}^d)$, converges 
narrowly as $n\to\infty$ to the measure $\Gamma$ defined by 
\begin{eqnarray*}
\Gamma(A):=\int_{A\cap\partial\Omega}(J(r,\cdot)(g))^q\, dr\, ,\quad A\in {\cal 
B}({\mathbb R}^d). 
\end{eqnarray*}
We remark that the measure $\Gamma$ is finite because of the assumption of 
Step 2 that $\partial\Omega\ni r\mapsto J(r,\cdot)(g)$ belongs to $L^q(\partial 
\Omega)$. 

Introduce $\Omega_1:=\{z\in {\mathbb R}^d:\inf_{y\in\Omega}|z-y|<1\}$. 
The inequality (\ref{4.5}) implies now  
\begin{align}\label{4.6}
\begin{split} 
&\hspace{-.5cm}\int_V (g(y^-(y,v),v))^q\, dv \\ 
&\hspace{.5cm}\le M_{\rm max}^qI_V(d)\int_{r\in\partial\Omega}|y-r|^{1-d}\, 
(J(r,\cdot)(g))^q\, dr \\ 
&\hspace{.5cm}=M_{\rm max}^qI_V(d)\, \lim_{n\to\infty}\int_{x\in\Omega_1}|y-x 
|^{1-d}\gamma_n(x)\, dx 
\end{split} 
\end{align}
where the sequence of functions ${\mathbb R}^d\ni y\mapsto\int_{x\in\Omega_1} 
|y-x|^{1-d}\gamma_n(x)\, dx$, $n\in {\mathbb N}$, is bounded in $L^p({\mathbb 
R}^d)$ for $1\le p<d/(d-1)$ because of 
\begin{align}\label{4.7}
\begin{split}
&\hspace{-.5cm}\sup_{n\in {\mathbb N}}\int_\Omega\left(\int_{x\in\Omega_1} 
|y-x|^{1-d}\gamma_n(x)\, dx\right)^p\, dy \\ 
&\hspace{.5cm}\le\sup_{n\in {\mathbb N}}\int_\Omega\left(\int_{x\in\Omega_1} 
\gamma_n(x)\, dx\right)^{p-1}\int_{x\in\Omega_1}|y-x|^{(1-d)p}\gamma_n(x)\, 
dx\, dy \\ 
&\hspace{.5cm}\le\sup_{n\in {\mathbb N}}\left(\int_{x\in\Omega_1}\gamma_n(x) 
\, dx\right)^p\sup_{x\in\Omega_1}\int_\Omega|y-x|^{(1-d)p}\, dy\, , 
\end{split} 
\end{align}
the definition of $\gamma_n$, $n\in {\mathbb N}$, and the finiteness of the 
measure $\Gamma$. 
\medskip

Furthermore, the limit in (\ref{4.6}) is weakly in $L^p(\Omega)$ for $1\le p< 
d/(d-1)$, i. e. $\Omega\ni y\mapsto\int_{r\in\partial\Omega}|y-r|^{1-d}\, (J(r, 
\cdot)(g))^q\, dr$, belongs also to $L^p(\Omega)$ for $1\le p<d/(d-1)$ under the 
assumption of Step 2 that $\int_{r\in\partial\Omega}(J(r,\cdot)(g))^q\, dr< 
\infty$. The mode of convergence in (\ref{4.6}), i. e. weak convergence in 
$L^p(\Omega)$, can be justified as follows. On the one hand, for any test 
function $\vp\in C_b(\Omega)$ and 
\begin{eqnarray*}
\Phi(x):=\int_{y\in{\Omega}}\vp(y)\, |y-x|^{(1-d)}\, dy\, ,\quad x\in{\mathbb R}^d, 
\end{eqnarray*}
we have $\Phi\in C_b({\mathbb R}^d)$ and 
\begin{eqnarray*} 
&&\hspace{-.5cm}\int_{y\in\Omega}\vp(y)\int_{x\in\Omega_1}|y-x|^{1-d}\gamma_n 
(x)\, dx\, dy \\ 
&&\hspace{.5cm}=\int_{x\in\Omega_1}\Phi(x)\, \gamma_n(x)\, dx\stack{n\to\infty} 
{\lra}\int_{r\in\partial\Omega}\Phi(r)(J(r,\cdot)(g))^q\, dr \\ 
&&\hspace{.5cm}=\int_{y\in\Omega}\vp(y)\int_{r\in\partial\Omega}|y-r|^{1-d}\, 
(J(r,\cdot)(g))^q\, dr\, dy
\end{eqnarray*} 
by the narrow convergence of $\Gamma_n$ to $\Gamma$ as $n\to\infty$; note that 
$\Gamma_n({\mathbb R}^d\setminus\Omega_1)=\Gamma({\mathbb R}^d\setminus\partial 
\Omega)=0$. On the other hand, by (\ref{4.7}), and the assumption $\int_{r\in 
\partial\Omega}(J(r,\cdot)(g))^q\, dr<\infty$, 
\begin{eqnarray*} 
&&\hspace{-.5cm}\limsup_{n\to\infty}\left\|\int_{x\in\Omega_1}|\cdot-x|^{1-d} 
\gamma_n(x)\, dx\right\|^p_{L^p(\Omega)} \\ 
&&\hspace{.5cm}\le\lim_{n\to\infty}\left(\int_{x\in\Omega_1}\gamma_n(x)\, dx
\right)^p\sup_{x\in\Omega_1}\int_\Omega|y-x|^{(1-d)p}\, dy \\ 
&&\hspace{.5cm}=\left(\int_{r\in\partial\Omega}(J(r,\cdot)(g))^q\, dr\right)^p 
\sup_{x\in\Omega_1}\int_\Omega|y-x|^{(1-d)p}\, dy<\infty\, . 
\end{eqnarray*} 
The last two chains of equalities and inequalities verify the claimed weak 
convergence in $L^p(\Omega)$ in (\ref{4.6}). 
\medskip 

Setting 
\begin{eqnarray*} 
C_{Q,v}:=c_qc_Q^q\lambda^q(I_V(d-1))^q(l_S(S^{d-1}))^{q-1}\, {\rm diam}(\Omega 
)^{q-1}
\end{eqnarray*} 
for the second integral in (\ref{4.4}) we obtain from (\ref{3.10})
\begin{align}\label{4.8}
\begin{split}  
&\hspace{-.5cm}\left(\int_V\int_0^{T_\Omega(y,v)}\lambda Q^+(g,g)(y-sv,v)\, ds 
\, dv\right)^q \\ 
&\hspace{.5cm}=\left(\int_{v_{min}}^{v_{max}}\alpha^{d-1}\int_{S^{d-1}}\int_0^{ 
T_\Omega(y,\alpha e)}\lambda Q^+(g,g)(y-s\, \alpha e,\alpha e)\, ds\, dl_S(e)\, 
d\alpha\right)^q \\ 
&\hspace{.5cm}\le c_Q^q\lambda^q\left(\int_{v_{min}}^{v_{max}}\hspace{-.3cm} 
\alpha^{d-2}\int_{S^{d-1}}\int_0^{T_\Omega(y,\alpha e)}\hspace{-.3cm}\|g(y-s\, 
\alpha e,\cdot)\|_{L^1(V)}\, \alpha\, ds\, dl_S(e)\, d\alpha\right)^q \\ 
&\hspace{.5cm}\le c_Q^q\lambda^q(I_V(d-1))^q\left(\int_{S^{d-1}}\int_0^{T_\Omega 
(y,e)}\|g(y-se,\cdot)\|_{L^1(V)}\, ds\, dl_S(e)\right)^q \\ 
&\hspace{.5cm}\le\frac{C_{Q,v}}{c_q}\int_{S^{d-1}}\int_0^{T_\Omega(y,e)}\|g(y-se, 
\cdot)\|^q_{L^1(V)}\, ds\, dl_S(e) \\ 
&\hspace{.5cm}\le\frac{C_{Q,v}}{c_q}\int_\Omega |x-y|^{1-d}\cdot\|g(x,\cdot)\|^q_{ 
L^1(V)}\, dx\, .
\end{split} 
\end{align}

With 
\begin{eqnarray*}
C_{M,v}:=c_qM_{\rm max}^qI_V(d) 
\end{eqnarray*}
it follows now from (\ref{4.4}), (\ref{4.6}), and (\ref{4.8}) that for a.e. 
$y\in\Omega$ 
\begin{align}\label{4.9}
\begin{split} 
&\hspace{-.5cm}\|g(y,\cdot)\|^q_{L^1(V)}\le C_{M,v}\lim_{n\to\infty}\int_{x 
\in\Omega_1}|y-x|^{1-d}\gamma_n(x)\, dx \\ 
&\hspace{1.0cm}+C_{Q,v}\int_{x\in\Omega}|y-x|^{1-d}\|g(x,\cdot)\|^q_{L^1(V)} 
\, dx\, .  
\end{split} 
\end{align}
According to the argumentation below (\ref{4.7}), 
\begin{eqnarray*}
\Omega\ni y\mapsto\int_{r 
\in\partial\Omega}|y-r|^{1-d}\, (J(r,\cdot)(g))^q\, dr\quad\mbox{\rm belongs 
to }L^p(\Omega) 
\end{eqnarray*}
for $1\le p<d/(d-1)$ by the assumption $\int_{r\in\partial\Omega}(J(r,\cdot)(g) 
)^q\, dr<\infty$. Furthermore, according to 
\begin{eqnarray*} 
&&\hspace{-.5cm}\int_\Omega\left(\int_{x\in\Omega}|y-x|^{1-d}\|g(x,\cdot)\|^q_{ 
L^1(V)}\, dx\right)^p\, dy \\ 
&&\hspace{.5cm}\le\left(\int_{x\in\Omega}\|g(x,\cdot)\|^q_{L^1(V)}\, dx\right)^p 
\sup_{x\in\Omega}\int_\Omega|y-x|^{(1-d)p}\, dy\, , 
\end{eqnarray*}
the function 
\begin{eqnarray*}
\Omega\ni y\mapsto\int_{x\in\Omega}|y-x|^{1-d}\|g(x,\cdot)\|^q_{L^1(V)}\, dx 
\quad\mbox{\rm belongs to }L^p(\Omega)  
\end{eqnarray*}
for $1\le p<d/(d-1)$ under the assumption of Step 2 that $\int_\Omega\|g(x,\cdot) 
\|^q_{L^1(V)}\, dx<\infty$. With this discussion in mind, it follows from 
(\ref{4.9}) that    
\begin{align}\label{4.10}
\begin{split} 
\int_{\Omega}\|g(y,\cdot )\|^{pq}_{L^1(V)}\, dy<\infty\quad\mbox{\rm if} 
\hphantom{aaaaaaaaaaaaa}\\ 
\int_{\partial\Omega}(J(r,\cdot))^q\, dr<\infty\quad \mbox{\rm and}\quad 
\int_{\Omega}\|g(x,\cdot )\|^q_{L^1(V)}\, dx<\infty.
\end{split} 
\end{align}

According to (\ref{3.6}) we have for some suitable constant $0<c_{J,1}<\infty$ 
only depending on $V$, $p$, and $q$  
\begin{align}\label{4.11}
\begin{split}
&\hspace{-.5cm}(J(r,\cdot)(g))^{pq}=\left(\int_{\{v\in V:v\circ n(r)\ge 0\}} 
v\circ n(r)\, g(r,v)\, dv\right)^{pq} \\ 
&\hspace{.5cm}\le c_{J,1}M_{\rm max}^{pq}\left(\int_{\{v\in V:v\circ n(r)\ge 
0\}}v\circ n(r)\, J(r^-,\cdot )(g)\, dv\right)^{pq} \\ 
&\hspace{1.0cm}+c_{J,1}\left(\int_{\{v\in V:v\circ n(r)\ge 0\}}v\circ n(r) 
\int_0^{T_\Omega(r,v)}\lambda Q^+(g,g)(r-sv,v)\, ds\, dv\right)^{pq}
\end{split} 
\end{align}
for a.e. $r\in\partial\Omega$. As above, the two items on the right-hand side 
will be treated separately. Recalling the definition of $C_\Omega $ in (\ref{3.13}) 
we get for the first item of the right-hand side of (\ref{4.11})
\begin{align}\label{4.12}
\begin{split} 
&\hspace{-.5cm}\int_{\{v\in V:v\circ n(r)\ge 0\}}v\circ n(r)\, J(r^-,\cdot ) 
(g)\, dv \\
&\hspace{.5cm}=\int_{v_{min}}^{v_{max}}\alpha^{d-1}\int_{e\in S_+^{d-1}(n(r))} 
\alpha e\circ n(r)\, J(r^-(r,\cdot e),\cdot )(g)\, dl_S(e)\, d\alpha \\ 
&\hspace{.5cm}=\int_{v_{min}}^{v_{max}}\alpha^d\int_{e\in S_+^{d-1}(n(r))}e 
\circ n(r)\, J(r^-(r,\cdot e),\cdot )(g)\, dl_S(e)\, d\alpha \\ 
&\hspace{.5cm}=I_V(d+1)\int_{y\in(\partial\Omega)_r}\frac{(r-y)\circ n(r)} 
{|r-y|}\, J(y,\cdot )(g)|r-y|^{1-d}\cdot\frac{n(y)\circ (y-r)}{|r-y|}\, dy \\ 
&\hspace{.5cm}\le C_\Omega^2 I_V(d+1)\int_{y\in(\partial\Omega)_r}J(y,\cdot)(g) 
|r-y|^{3-d}\, dy \\ 
&\hspace{.5cm}\le C_\Omega^2I_V(d+1)({\rm diam}(\Omega))^{3-d}\int_{y\in\partial 
\Omega}J(y,\cdot)(g)\, dy  
\end{split} 
\end{align}
where we are reminded of $d=2$ or $d=3$. Consequently, with some suitable constant 
$0<c_{J,2}<\infty$ only depending on $\Omega$, $V$, $p$, and $q$ it holds that 
\begin{eqnarray*}
&&\hspace{-.5cm}\left(\int_{\{v\in V:v\circ n(r)\ge 0\}}v\circ n(r)\, J(r^-,\cdot ) 
(g)\, dv\right)^{pq} \\ 
&&\hspace{.5cm}\le c_{J,2}\left(\int_{\{v\in V:v\circ n(r)\ge 0\}}(J(y,\cdot )(g) 
)^q\, dy\right)^p\, . 
\end{eqnarray*}
Recalling the assumption $\int_{y\in\partial\Omega}(J(y,\cdot )(g))^q\, dy< 
\infty$ we obtain 
\begin{eqnarray}\label{4.13} 
\int_{r\in\partial\Omega}\left(\int_{\{v\in V:v\circ n(r)\ge 0\}}v\circ n(r)\, 
J(r^-,\cdot )(g)\, dv\right)^{pq}\, dr<\infty\, . 
\end{eqnarray}

Let us turn to the second item in (\ref{4.11}). Here we follow and slightly 
modify the calculations performed in (\ref{4.8}) to obtain 
\begin{align}\label{4.14}
\begin{split} 
&\hspace{-0.5cm}\int_{\{v\in V:v\circ n(r)\ge 0\}}v\circ n(r)\int_0^{T_\Omega 
(r,v)}\lambda Q^+(g,g)(r-sv,v)\, ds\, dv \\ 
&\hspace{0.5cm}\le c_Q\lambda\int_{\{v\in V:v\circ n(r)\ge 0\}}\int_0^{T_\Omega 
(r,v)}\|g(r-sv,\cdot )\|_{L^1(V)}|v|\, ds\, dv \\ 
&\hspace{0.5cm}=c_Q\lambda\int_{v_{min}}^{v_{max}}\alpha^{d-1}\int_{S_+^{d-1} 
(n(r))}\int_0^{T_\Omega(r,e)}\|g(r-se,\cdot)\|_{L^1(V)}\, ds\, dl_S(e)\, d\alpha 
 \\ 
&\hspace{0.5cm}\le c_Q\lambda I_V(d)\int_{x\in (\Omega)_r}|r-x|^{1-d}\cdot\| 
g(x,\cdot )\|_{L^1(V)}\, dx\, .  
\end{split} 
\end{align} 
Noting that $\int_{x\in\Omega}\|g(x,\cdot)\|_{L^1(V)}\, dx=1$ as well as 
\begin{eqnarray*}
c_3:=\sup_{y\in{\partial\Omega}}\int_{\Omega}|y-x|^{(1-d)p}\, dx<\infty\quad 
\mbox{\rm for }1\le p<d/(d-1) 
\end{eqnarray*}

and setting 
\begin{eqnarray*}
c_{J,3}:=c_3^{q-1}(c_Q\lambda I_V(d))^{pq} 
\end{eqnarray*}
we continue by 
\begin{align}\label{4.15}
\begin{split}
&\hspace{-0.5cm}\left(\int_{\{v\in V:v\circ n(r)\ge 0\}}v\circ n(r)\int_0^{ 
T_\Omega(r,v)}\lambda Q^+(g,g)(r-sv,v)\, ds\, dv\right)^{pq} \\  
&\hspace{0.5cm}\le (c_Q\lambda I_V(d))^{pq}\left(\int_{\Omega}|r-x|^{(1-d)p} 
\|g(x,\cdot)\|_{L^1(V)}\, dx\right)^q \\  
&\hspace{0.5cm}\le c_{J,3}\int_{\Omega}|r-x|^{(1-d)p}\|g(x,\cdot )\|^q_{L^1 
(V)}\, dx\, . 
\end{split} 
\end{align} 
Let us recall the definition $\Omega_1=\{z\in {\mathbb R}^d:\inf_{y\in\Omega} 
|z-y|<1\}$. Moreover, let us take into consideration 
\begin{eqnarray*}
\int_{x\in {\mathbb R}^d}|r-x|^{(1-d)p}\cdot |x-z|^{1-d}\, dx=c_{p,d}\cdot 
|r-z|^{1+(1-d)p}\, ,\quad z\in {\mathbb R}^d, 
\end{eqnarray*}
for some constant $0<c_{p,d}<\infty$ only depending on $p$ and $d$. Furthermore 
observe that $1+(1-d)p>1-d$ for $1\le p<d/(d-1)$ and thus 
\begin{eqnarray}\label{4.16} 
\sup_{z\in\Omega_1}\int_{\partial\Omega}|r-z|^{1+(1-d)p}\, dr<\infty\, ,\quad 1 
\le p<d/(d-1).  
\end{eqnarray}
According to (\ref{4.9}) we have 
\begin{eqnarray*}
&&\hspace{-.5cm}\int_\Omega |r-x|^{(1-d)p}\|g(x,\cdot )\|^q_{L^1(V)}\, dx\\
&&\hspace{.5cm}\le C_{M,v}\int_\Omega |r-x|^{(1-d)p}\left(\lim_{n\to\infty} 
\int_{\Omega_1}|x-z|^{1-d}\cdot\gamma_n(z)\, dz\right)\, dx \\ 
&&\hspace{1.0cm}+c_{p,d}C_{Q,v}\int_\Omega|r-z|^{1+(1-d)p}\cdot\|g(z,\cdot ) 
\|^q_{L^1(V)}\, dz\, . 
\end{eqnarray*}
We remark that the integration $\int_\Omega |r-x|^{(1-d)p}\ldots\, dx$ is 
the actual reason for the weak limit in $L^p(\Omega)$ in (\ref{4.6}). Now 
we take advantage of the facts that $\Omega\ni x\mapsto|r-x|^{(1-d)p}\in 
L^{p/(p-1)}(\Omega)$ and $\Omega\ni x\mapsto\int_{\Omega_1}|x-z|^{1-d}\cdot 
\gamma_n(z)\, dz$ converges weakly in $L^p(\Omega)$ to $\Omega\ni x\mapsto 
\int_{\partial \Omega}|x-y|^{1-d} (J(y,\cdot))^q\, dy$  for $1\le p<d/(d-1)$. 
For the latter recall the discussion below (\ref{4.7}). We get 
\begin{align}\label{4.17}
\begin{split}  
&\hspace{-.5cm}\int_\Omega |r-x|^{(1-d)p}\|g(x,\cdot )\|^q_{L^1(V)}\, dx \\ 
&\hspace{.5cm}\le C_{M,v}\int_\Omega |r-x|^{(1-d)p}\int_{\partial\Omega} 
|x-y|^{1-d}(J(y,\cdot))^q\, dy\, dx \\ 
&\hspace{1.0cm}+c_{p,d}C_{Q,v}\int_\Omega|r-z|^{1+(1-d)p}\cdot\|g(z,\cdot ) 
\|^q_{L^1(V)}\, dz \\ 
&\hspace{.5cm}\le c_{p,d}C_{M,v}\int_{\partial\Omega}|r-y|^{1+(1-d)p}(J(y, 
\cdot))^q\, dy \\ 
&\hspace{1.0cm}+c_{p,d}C_{Q,v}\int_{\Omega}|r-z|^{1+(1-d)p}\|g(z,\cdot ) 
\|^q_{L^1(V)}\, dz\, .
\end{split} 
\end{align}
Because of (\ref{4.16}) the right-hand side, and hence the left-hand side, of 
(\ref{4.17}) is integrable with respect to the variable $r$ and the Lebesgue 
measure on $(\partial\Omega,{\cal B}(\partial\Omega))$ under the assumptions 
of Step 2, $\int_{\partial\Omega}(J(y,\cdot))^q\, dy<\infty$ and $\int_{\Omega} 
\|g(z,\cdot )\|^q_{L^1(V)}\, dz<\infty$. We obtain from (\ref{4.15}), (\ref{4.16}), 
and (\ref{4.17})
\begin{eqnarray}\label{4.18} 
\quad\int_{r\in\partial\Omega}\left(\int_{v\circ n(r)\ge 0}v\circ n(r)\int_0^{ 
T_\Omega (r,v)}\lambda Q^+(g,g)(r-sv,v)\, ds\, dv\right)^{pq}\, dr<\infty
\end{eqnarray}
under the assumptions $\int_{\partial\Omega}(J(y,\cdot))^q\, dy<\infty$ and 
$\int_{\Omega}\|g(z,\cdot )\|^q_{L^1(V)}\, dz<\infty$. Reviewing (\ref{4.11}), 
(\ref{4.13}), and (\ref{4.18}) we may now conclude 
\begin{align} 
\begin{split} 
\int_{\partial\Omega}(J(y,\cdot))^{pq}\, dy<\infty\quad\mbox{\rm if} 
\hphantom{aaaaaaaaaaaaa}\nonumber \\
\int_{\partial\Omega}(J(y,\cdot))^q\, dy<\infty\quad\mbox{\rm and}\quad 
\int_{\Omega}\|g(z,\cdot )\|^q_{L^1(V)}\, dz<\infty.
\end{split} 
\end{align}
Together with (\ref{4.10}) the last line says that we have accomplished the 
execution step of the iteration. Summing up Steps 1 and 2, we have proved that 
\begin{align}\label{4.19}
\begin{split} 
\int_{\partial\Omega}(J(y,\cdot))^q\, dy<\infty\quad\mbox{\rm and}\quad\int_{ 
\Omega}\|g(z,\cdot )\|^q_{L^1(V)}\, dz<\infty
\end{split} 
\end{align}
for all $1\le q<\infty$.
\end{proof}

For the sake of completeness we provide a fact which may be known to experts, 
even in a more general form. It is a part of \cite{JK82}, Theorem 5.8. We mention 
that the measure $\omega\equiv\omega^{x_0}$ on $(\partial\Omega,{\mathcal B} 
(\partial\Omega))$ in this reference is the harmonic measure relative to $\Omega$ 
and some $x_0\in\Omega$. However since $\partial\Omega$ is smooth in our setting, 
here $\omega$ is equivalent to the Lebesgue surface measure on $\partial\Omega$. 
The Radon-Nikodym derivative is the Poisson kernel $\t k(x_0,\cdot)$ on $\partial 
\Omega$ which is bounded on $\partial\Omega$. For the Poisson kernel on $\Omega 
\times\partial\Omega$ we refer to \cite{Dy02}, Theorem 1.4 of Chapter 6, in 
particular (1.17). However we remark that in \cite{Dy02} the Poisson kernel is 
defined as the Radon-Nikodym derivative of the harmonic measures with respect to 
the normalized Lebesgue surface measure. 

For $\alpha>0$ introduce $\Gamma_\alpha(y):=\{x\in\Omega:|x-y|<(1+\alpha)\inf_{z\in 
\partial\Omega}|x-z|\}$. 
\begin{lemma}\label{Lemma4.2} (\cite{JK82}, Theorem 5.8)
Let $f\in L^1(\partial\Omega)$. Then there is a harmonic function $h_f$ on $\Omega$ 
such that $f(y)=\lim_{\Gamma_\alpha(y)\ni x\to y}h_f(x)$ for a.e. $y\in\partial\Omega$ 
and all $\alpha>0$. We have $h_f=\int_{\partial\Omega}f\, d\omega^x$, $x\in\Omega$. 
\end{lemma}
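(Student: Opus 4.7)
The plan is to construct $h_f$ as a Poisson integral and then establish the nontangential boundary behavior via a standard approximation argument combined with a weak-type maximal inequality. For $x\in\Omega$ define
\begin{equation*}
h_f(x):=\int_{\partial\Omega}f(y)\, d\omega^x(y)=\int_{\partial\Omega}f(y)\, k(x,y)\, d\sigma(y),
\end{equation*}
where $k(x,y)$ is the Poisson kernel on $\Omega\times\partial\Omega$ and $\sigma$ is surface measure on $\partial\Omega$. Since $\partial\Omega$ is smooth, the Poisson kernel exists, is harmonic in $x$ for each fixed $y\in\partial\Omega$, and satisfies the standard bounds $0\le k(x,y)\le C\,\delta(x)/|x-y|^d$ with $\delta(x):=\mathrm{dist}(x,\partial\Omega)$. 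Harmonicity of $h_f$ on $\Omega$ then follows by differentiating under the integral, which is justified since $f\in L^1(\partial\Omega)$ and $k(\cdot,y)$ is smooth away from $y$.

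The second step is to verify the boundary limit for the dense subclass $C(\partial\Omega)\subset L^1(\partial\Omega)$. For $f\in C(\partial\Omega)$, solvability of the classical Dirichlet problem on the smooth domain $\Omega$ ensures that $h_f$ extends continuously to $\overline{\Omega}$ with $h_f|_{\partial\Omega}=f$. This yields immediately the (even unrestricted) pointwise convergence $h_f(x)\to f(y_0)$ as $x\to y_0$ for every $y_0\in\partial\Omega$ and hence along every nontangential approach region $\Gamma_\alpha(y_0)$.

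The third step, and the heart of the matter, is a nontangential maximal function estimate. Define
\begin{equation*}
N_\alpha h(y):=\sup_{x\in\Gamma_\alpha(y)}|h(x)|,\qquad y\in\partial\Omega.
\end{equation*}
Using the Poisson kernel bound together with a Vitali-type covering on $\partial\Omega$, one shows $N_\alpha h_f(y)\le C_\alpha\, Mf(y)$, where $M$ denotes the Hardy–Littlewood maximal operator on $(\partial\Omega,\sigma)$ viewed as a space of homogeneous type. The weak-type $(1,1)$ inequality for $M$ then gives
\begin{equation*}
\sigma\bigl(\{y\in\partial\Omega:N_\alpha h_f(y)>\lambda\}\bigr)\le \frac{C_\alpha}{\lambda}\,\|f\|_{L^1(\partial\Omega)}.
\end{equation*}
Combined with the corresponding weak-type bound for $M$ itself (which controls the a.e. Lebesgue differentiation limit on $\partial\Omega$), a standard $\varepsilon$-$\delta$ approximation $f=f_1+f_2$ with $f_1\in C(\partial\Omega)$ and $\|f_2\|_{L^1}<\varepsilon$ forces the exceptional set for nontangential convergence of $h_f$ to have measure zero.

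The main technical obstacle is the nontangential maximal function estimate: one must control the Poisson integral of an $L^1$ datum along every cone $\Gamma_\alpha(y)$ simultaneously by an object for which Lebesgue-type differentiation is available. On a smooth domain this is manageable because $k(x,y)$ is quantitatively close to the half-space Poisson kernel after localization and flattening; on a general Lipschitz domain (the setting of Jerison–Kenig) the same scheme works but requires substantially more care. Once this maximal inequality is in hand, the a.e. convergence assertion for all $\alpha>0$ follows by a routine diagonalization over a countable sequence $\alpha_n\uparrow\infty$.
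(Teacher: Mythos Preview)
Your proof sketch is correct and follows the standard route to nontangential a.e.\ convergence of Poisson integrals: define $h_f$ via the Poisson kernel, verify the conclusion on the dense subclass $C(\partial\Omega)$, establish the pointwise bound $N_\alpha h_f\le C_\alpha Mf$ by the Hardy--Littlewood maximal function on $\partial\Omega$, and pass to general $f\in L^1(\partial\Omega)$ by the weak-type $(1,1)$ inequality and an approximation argument.

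Note, however, that the paper does not prove this lemma at all. It is stated as a citation of \cite{JK82}, Theorem~5.8, prefaced by the remark that it is ``a fact which may be known to experts''; the paper only adds the observation that on a smooth boundary the harmonic measure $\omega^{x_0}$ is equivalent to surface measure with bounded Radon--Nikodym density. Your argument is essentially the smooth-domain specialization of the Jerison--Kenig proof (which is carried out in the far more delicate NTA setting), so there is nothing to compare: you have supplied what the paper deliberately omits.
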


Let us continue with our analysis. 
\begin{lemma}\label{Lemma4.3} 
Let $g$ satisfy (\ref{3.2}) in the sense of Remark \ref{Remark3.2} and suppose 
(\ref{3.1}). The function $\partial\Omega\ni r\mapsto J(r,\cdot)(g)$ belongs to 
$L^\infty(\partial\Omega)$ and $\Omega\ni y\mapsto\|g(y,\cdot)\|_{L^1(V)}$ 
belongs to $L^\infty(\Omega)$. 
\end{lemma}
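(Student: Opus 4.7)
My strategy is to revisit the pointwise identity (\ref{3.6}) one last time, now invoking the $L^q$ bounds for every finite $q$ supplied by Lemma \ref{Lemma4.1}, so that with a suitable choice of H\"older exponent the kernel singularities become uniformly integrable and force $L^\infty$ bounds. I would prove the two assertions in sequence, first the bound on $J$, then the bound on $\|g(\cdot,\cdot)\|_{L^1(V)}$.

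For $r\in\partial\Omega$ with $v\circ n(r)\ge 0$, combining (\ref{3.6}) with $\psi_g\ge 1$ and the boundary condition (\ref{2.1}) applied at $r^-(r,v)$ yields, after multiplication by $v\circ n(r)$ and integration over $\{v\circ n(r)\ge 0\}$, the decomposition $J(r,\cdot)(g)\le M_{\rm max}\, A(r)+B(r)$ where
\begin{eqnarray*}
A(r):=\int_{\{v\circ n(r)\ge 0\}}v\circ n(r)\, J(r^-(r,v),\cdot)(g)\, dv
\end{eqnarray*}
and $B(r)$ is the analogous integral of $\int_0^{T_\Omega(r,v)}\lambda Q^+(g,g)(r-sv,v)\, ds$. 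The polar-coordinate calculation carried out in (\ref{4.12}) shows
\begin{eqnarray*}
A(r)\le C_\Omega^2\, I_V(d+1)\, ({\rm diam}(\Omega))^{3-d}\int_{\partial\Omega}J(y,\cdot)(g)\, dy\, ,
\end{eqnarray*}
which is finite by Lemma \ref{Lemma4.1} and uniform in $r$; the key point is that two applications of (\ref{3.13}) convert the naive kernel $|r-y|^{1-d}$ into the \emph{bounded} kernel $|r-y|^{3-d}$ when $d\in\{2,3\}$. For $B(r)$, Lemma \ref{Lemma3.4} and the polar change of (\ref{4.14}) give $B(r)\le c_Q\lambda I_V(d)\int_\Omega|r-x|^{1-d}\|g(x,\cdot)\|_{L^1(V)}\, dx$, whence H\"older's inequality with conjugate exponents $b$ and $b/(b-1)$ produces
\begin{eqnarray*}
B(r)\le c_Q\lambda I_V(d)\left(\int_\Omega\|g(x,\cdot)\|_{L^1(V)}^b\, dx\right)^{1/b}\left(\int_\Omega|r-x|^{(1-d)b/(b-1)}\, dx\right)^{(b-1)/b}\, .
\end{eqnarray*}
Choosing $b>d$ makes the kernel exponent strictly greater than $-d$, so the $x$-integral is bounded uniformly in $r\in\overline{\Omega}$; the first factor is finite by Lemma \ref{Lemma4.1}. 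This delivers $\|J\|_{L^\infty(\partial\Omega)}<\infty$.

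Once $J\in L^\infty(\partial\Omega)$ is in hand, the bound on $\|g(y,\cdot)\|_{L^1(V)}$ is essentially immediate. Applying (\ref{3.6}) for $y\in\Omega$ together with (\ref{2.1}) at $y^-(y,v)$ and $\psi_g\ge 1$,
\begin{eqnarray*}
g(y,v)\le J(y^-(y,v),\cdot)(g)\, M(y^-(y,v),v)+\int_0^{T_\Omega(y,v)}\lambda Q^+(g,g)(y-sv,v)\, ds\, ;
\end{eqnarray*}
the first summand is now pointwise majorised by $\|J\|_{L^\infty(\partial\Omega)}M_{\rm max}$, so its $v$-integral contributes at most $\|J\|_{L^\infty(\partial\Omega)}M_{\rm max}\, |V|$, uniformly in $y$. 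The $v$-integral of the second summand is handled exactly as the $B(r)$ term above: polar coordinates and Lemma \ref{Lemma3.4} produce $c_Q\lambda I_V(d-1)\int_\Omega|y-x|^{1-d}\|g(x,\cdot)\|_{L^1(V)}\, dx$, and H\"older with the same $b>d$ gives a uniform bound, yielding $\|g(\cdot,\cdot)\|_{L^1(V)}\in L^\infty(\Omega)$.

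The main obstacle is the treatment of $A(r)$ in the first step. The kernel $|r-y|^{1-d}$ that arises naturally from polar coordinates on the boundary is \emph{not} uniformly integrable on the $(d-1)$-dimensional surface $\partial\Omega$, so a direct H\"older closure using $J\in L^q(\partial\Omega)$ (with any finite $q$) is doomed. The rescue is the double application of (\ref{3.13}) in (\ref{4.12}), which furnishes two extra factors of $|r-y|$ and converts the kernel into the bounded expression $|r-y|^{3-d}$; this is precisely where the hypothesis $d\in\{2,3\}$ becomes indispensable, allowing the bare $L^1(\partial\Omega)$-bound on $J$ already supplied by Lemma \ref{Lemma4.1} to close the estimate. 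All other singularities are the standard $d$-dimensional ones $|y-x|^{1-d}$, which are tamed by H\"older once we take $b>d$ and use Lemma \ref{Lemma4.1} at an exponent exceeding $d$.
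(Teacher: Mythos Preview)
Your argument for $J\in L^\infty(\partial\Omega)$ is essentially the paper's: the same decomposition via (\ref{3.6}), the same use of (\ref{4.12}) to render the boundary-to-boundary kernel bounded (two applications of (\ref{3.13})), and the same H\"older closure on the interior term using Lemma~\ref{Lemma4.1}. Where you genuinely diverge is in the second assertion. The paper passes through the integral representation (\ref{4.21}), in which the boundary contribution carries the Poisson-type kernel $|y-r|^{1-d}\,n(r)\circ(r-y)/|y-r|$; since $\int_{\partial\Omega}|y-r|^{1-d}\,dr$ blows up logarithmically as $y\to\partial\Omega$, a crude $\|J\|_\infty$ bound applied \emph{after} converting to the boundary integral would fail, and the paper instead exploits that $y\mapsto h_r(y)$ is harmonic, invokes Lemma~\ref{Lemma4.2}, and closes via the maximum principle. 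You sidestep this entirely by bounding $J(y^-(y,v),\cdot)(g)\,M(y^-(y,v),v)\le\|J\|_{L^\infty(\partial\Omega)}M_{\rm max}$ \emph{before} integrating in $v$, so that the $v$-integral contributes only the finite constant $|V|$ and the singular boundary kernel never appears. This is more elementary and dispenses with Lemma~\ref{Lemma4.2} altogether; the only point worth making explicit is that $J(y^-(y,v),\cdot)(g)\le\|J\|_{L^\infty(\partial\Omega)}$ holds for a.e.\ $v\in V$ at each fixed $y\in\Omega$, which follows because the map $e\mapsto y^-(y,\cdot\,e)$ from $S^{d-1}$ to $\partial\Omega$ has a.e.\ nonvanishing Jacobian and hence pulls back surface-null sets to null sets.
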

\begin{proof} For a.e. $r\in\partial\Omega$ we have by (\ref{4.11}), 
(\ref{4.12}), and (\ref{4.14}) 
\begin{eqnarray*}
&&\hspace{-.5cm}J(r,\cdot)(g)=\int_{\{v\in V:v\circ n(r)\ge 0\}}v\circ n(r)\, 
g(r,v)\, dv \\ 
&&\hspace{.5cm}\le M_{\rm max}\int_{\{v\in V:v\circ n(r)\ge 0\}}v\circ n(r)\, 
J(r^-(r,v),\cdot)(g)\, dv \\ 
&&\hspace{1.0cm}+\int_{\{v\in V:v\circ n(r)\ge 0\}}v\circ n(r)\int_0^{T_\Omega 
(r,v)}\lambda Q^+(g,g)(r-sv,v)\, ds\, dv \\ 
&&\hspace{.5cm}\le C_{M,w}\int_{y\in\partial\Omega}\, J(y,\cdot)(g)\, dy+C_{Q, 
w}\int_\Omega|r-y|^{1-d}\, \|g(y,\cdot)\|_{L^1(V)}\, dy  
\end{eqnarray*}
where 
\begin{eqnarray*}
C_{M,w}:=M_{\rm max}C_\Omega^2I_V(d+1)({\rm diam}(\Omega))^{3-d}\quad\mbox{\rm 
and}\quad C_{Q,w}:=c_Q\lambda I_V(d). 
\end{eqnarray*}
Together with (\ref{4.19}) it is immediate from H\"older's inequality and the 
boundedness of $\Omega$ that 
\begin{eqnarray}\label{4.20} 
\partial\Omega\ni r\mapsto J(r,\cdot)(g)\quad\mbox{\rm belongs to }L^\infty 
(\partial\Omega)\, .
\end{eqnarray}

Similar to (\ref{4.9}) it follows from (\ref{4.4}), the first line of (\ref{4.5}), 
and (\ref{4.8}) for $q=1$, i. e. $c_q=1$ in (\ref{4.4}) and (\ref{4.8}), that for 
a.e. $y\in \Omega$
\begin{align}\label{4.21}
\begin{split} 
&\hspace{-0.5cm}\|g(y,\cdot)\|_{L^1(V)}\le C_{M,v}\int_{r\in(\partial\Omega)_y} 
|y-r|^{1-d}\cdot\frac{n(r)\circ (r-y)}{|y-r|}J(r,\cdot)(g)\, dr \\ 
&\hspace{0.5cm}+C_{Q,v}\int_\Omega |y-x|^{1-d} \|g(x,\cdot)\|_{L^1(V)}\, dx\, . 
\end{split} 
\end{align}
Again by H\"older's inequality and the boundedness of $\Omega$, (\ref{4.19}) 
implies that the second item on the right-hand side of (\ref{4.21}) is bounded for 
$y\in\Omega$. 
\medskip

Regarding the first item on the right-hand side of (\ref{4.21}), we observe that 
for all $r\in\partial\Omega$ the function ${\mathbb R}^d\setminus\{r\}\ni y\mapsto 
h_r(y):=|y-r|^{1-d}\cdot n(r)\circ (r-y)/|y-r|$ is a harmonic function on 
${\mathbb R}^d\setminus\{r\}$. Furthermore, by (\ref{3.13}) we have 
\begin{align}\label{4.22}
\begin{split} 
&\hspace{-0.5cm}\int_{r\in\partial\Omega}|h_r(y)|\, dr \\
&\hspace{0.5cm}=\int_{r\in\partial\Omega}|y-r|^{1-d}\cdot\frac{|n(r)\circ (r-y)|} 
{|y-r|}\, dr \\ 
&\hspace{0.5cm}\le C_\Omega\sup_{r'\in\partial\Omega}\int_{r\in\partial\Omega} 
|r'-r|^{(2-d)}\, dr=:c_{\Omega,1}<\infty\, ,\quad y\in\partial\Omega.  
\end{split} 
\end{align}
With (\ref{4.20}) we obtain 
\begin{eqnarray}\label{4.23} 
\int_{r\in\partial\Omega}|h_r(y)|J(r,\cdot)(g)\, dr\le c_{\Omega,1}\cdot\esssup_{ 
r\in\partial\Omega}J(r,\cdot)(g)<\infty\, ,\quad y\in\partial\Omega.
\end{eqnarray}

According to (\ref{4.22}) we have $h_r|_{\partial\Omega}\in L^1(\partial\Omega)$, 
$r\in\partial\Omega$. It follows now from Lemma \ref{Lemma4.2} that there is a 
harmonic function $h_{r,\Omega}$ on $\Omega$ for which $\lim_{\Gamma_\alpha(y) 
\ni x\to y}h_{r,\Omega}(x)=|h_r(y)|$ for a.e. $y\in \partial\Omega$ for all 
$\alpha>0$. Since both functions $h_r$ as well as $h_{r,\Omega}$ are harmonic on 
$\Omega$ we can apply the representation via harmonic measures provided in Lemma 
\ref{Lemma4.2} to both. The boundary value functions of $h_r$ and $h_{r,\Omega}$ 
are, respectively, $h_r|_{\partial\Omega}$ and $|h_r(y)||_{\partial\Omega}$. Thus 
$h_r\le h_{r,\Omega}$ on $\Omega$ for all $r\in\partial\Omega$. The function  
\begin{eqnarray*} 
\Omega\ni y\mapsto\int_{r\in\partial\Omega}h_{r,\Omega}(y)J(r,\cdot)(g)\, dr=: 
H_\Omega(y) 
\end{eqnarray*}
is non-negative and harmonic on $\Omega$. Since $H_\Omega$ cannot have a local 
maximum on $\Omega$, it bounded by $c_{\Omega,1}\cdot\esssup_{r\in\partial\Omega} 
J(r,\cdot)(g)$ according to (\ref{4.23}). This allows us to conclude from 
$h_r\le h_{r,\Omega}$ on $\Omega$ that
\begin{eqnarray*}
&&\hspace{-0.5cm}\int_{r\in(\partial\Omega)_{y}}|y-r|^{1-d}\cdot\frac{n(r)\circ 
(r-y)}{|y-r|}J(r,\cdot)(g)\, dr \\
&&\hspace{0.5cm}=\int_{r\in(\partial\Omega)_{y}}h_r(y)J(r,\cdot)(g)\, dr \\
&&\hspace{0.5cm}\le\int_{r\in(\partial\Omega)_{y}}h_{r,\Omega}(y)J(r,\cdot)(g)\, 
dr \\
&&\hspace{0.5cm}\le H_\Omega(y)\le c_{\Omega,1}\cdot\esssup_{r\in\partial\Omega} 
J(r,\cdot)(g)\, ,\quad y\in\Omega.\vphantom{\int}
\end{eqnarray*}
In conclusion, both items on the right-hand side of (\ref{4.21}) have turned out to 
be bounded for a.e. $y\in\Omega$. In other words, 
\begin{eqnarray*} 
\Omega\ni y\mapsto \|g(y,\cdot)\|_{L^1(V)}\quad\mbox{\rm belongs to }L^\infty 
(\Omega).
\end{eqnarray*}
We have completed the proof. 
\end{proof}
\begin{theorem}\label{Theorem4.4} 
Let $g$ satisfy (\ref{3.2}) in the sense of Remark \ref{Remark3.2} and suppose 
(\ref{3.1}). We have 
\begin{eqnarray*}
g\in L^\infty(\Omega\times V)\quad{and}\quad\frac1g\in L^\infty(\Omega\times V) 
\, . 
\end{eqnarray*} 
\end{theorem}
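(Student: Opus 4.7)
The argument splits into an upper bound $g \in L^\infty(\Omega \times V)$, which is a quick consequence of the previously established estimates, and a lower bound $1/g \in L^\infty(\Omega \times V)$, which is the main task.

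\textbf{Upper bound.} I would specialize the representation (\ref{3.6}) of Lemma~\ref{Lemma3.3} by setting $r = y$ and $t = T_\Omega(y,v)$, so that $r - tv = y^-(y,v) \in \partial\Omega$. Solving for $g(y,v)$ yields, for a.e. $(y,v) \in \Omega \times V$,
\[
g(y,v) \;=\; \frac{g(y^-,v)}{\psi_g(y,v,T_\Omega)} + \int_0^{T_\Omega(y,v)} \frac{\lambda Q^+(g,g)(y^- + sv, v)}{\psi_g(y,v,s)}\, ds.
\]
Since $\psi_g \ge 1$, the boundary term is at most $J(y^-,\cdot)(g)\, M(y^-,v) \le M_{\max}\,\|J(\cdot,\cdot)(g)\|_{L^\infty(\partial\Omega)}$, finite by Lemma~\ref{Lemma4.3} and condition~(ii). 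By Lemma~\ref{Lemma3.4} the integrand is at most $\lambda c_Q \|g(y^-+sv,\cdot)\|_{L^1(V)}$, which is uniformly bounded by Lemma~\ref{Lemma4.3}; and $T_\Omega(y,v) \le \mathrm{diam}(\Omega)/v_{\min}$. This gives the desired pointwise upper bound.

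\textbf{Lower bound.} The same representation, together with $\psi_g \le \sup \psi_g < \infty$ and $M \ge M_{\min}$, yields the pointwise inequality
\[
g(y,v) \;\ge\; \frac{g(y^-(y,v),v)}{\sup\psi_g} \;\ge\; \frac{M_{\min}}{\sup \psi_g}\, J(y^-(y,v),\cdot)(g),
\]
so it is enough to establish $J(\cdot,\cdot)(g) \ge c_J > 0$ for a.e. point of $\partial\Omega$. I would apply the representation to $(r,v) \in \partial\Omega \times V$ with $v \circ n(r) > 0$ and integrate against $v \circ n(r)$ over $\{v \circ n(r) \ge 0\}$. The boundary piece produces a non-negative integral operator $K$ acting on $J$ (encoding one step of diffusive reflection), and the $Q^+$ piece contributes a non-negative source $S(r)$, giving a self-referential inequality $J \ge KJ + S$. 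Iterating, $J$ is bounded below by the accumulated sources $\sum_{k \ge 0} K^k S$, and the normalization $\|g\|_{L^1(\Omega \times V)} = 1$ combined with the strict positivity of $B$ and of $h_\gamma$ on open sets is to be used to force this sum to be bounded below by a positive constant uniform in $r$. Inserting $J \ge c_J$ into the pointwise bound above produces $g \ge M_{\min}\,c_J/\sup\psi_g > 0$ a.e.

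\textbf{Main obstacle.} The critical step is extracting a strictly positive uniform lower bound from the iterated source $\sum_{k \ge 0} K^k S$. This requires combining the positivity of $g$ on a set of positive measure (forced by $\|g\|_{L^1} = 1$ and the $L^\infty$ upper bound) with the spreading mechanisms provided by free transport along characteristics, the spatial convolution with $h_\gamma$ in $Q^+$, and the diffusive reflection law $M$; boundedness of $\Omega$ and smoothness of $\partial\Omega$ enter to ensure the bound is uniform across $\partial\Omega$.
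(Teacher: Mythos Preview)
Your upper bound is essentially the paper's Step~2: the same representation from (\ref{3.6}), the same use of $\psi_g\ge 1$, Lemma~\ref{Lemma3.4} for $Q^+$, and Lemma~\ref{Lemma4.3} for the $L^\infty$ bounds on $J$ and on $\|g(y,\cdot)\|_{L^1(V)}$. (Minor slip: after the change of variable the denominator in your integral should be $\psi_g(y,v,T_\Omega-s)$, but since you only use $\psi_g\ge 1$ this is harmless.)

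For the lower bound the situation is different. You and the paper start from the same inequality, namely the operator relation that in your notation reads $J\ge c\,KJ$ (the paper writes it as $KJ\le \sup\psi_g\cdot J$ in (\ref{4.25})). But the paper exploits it in the \emph{opposite} direction, via a contradiction argument: if $J(r_k,\cdot)(g)\to 0$ along a sequence $r_k\to r_\infty$, then the left-hand side of (\ref{4.25}) at $r_k$ tends to zero; rewriting this as a weighted integral of $J$ over the portion $(\partial\Omega)_{r_\infty}$ of the boundary visible from $r_\infty$ forces $J=0$ a.e.\ there, and iterating the visibility relation propagates $J=0$ to all of $\partial\Omega$. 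This gives $g=0$ on the boundary, hence $Q^+(g,g)=0$ a.e.\ via (\ref{3.6}), hence $g=0$ a.e.\ in $\Omega\times V$, contradicting $\|g\|_{L^1}=1$. Note that this argument uses only the free-transport part of (\ref{3.6}) and never needs any positivity of $B$ or $h_\gamma$; indeed it works verbatim when $B\equiv 0$.

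Your constructive route $J\ge\sum_{k\ge 0}K^kS$ has a genuine gap at exactly the point you flag. First, you invoke ``strict positivity of $B$ and of $h_\gamma$ on open sets'', but neither is assumed: condition (\ref{3.1}) only places ${\bf B}\in C_c^\infty((0,\infty)\times(0,\pi/2))$, and (vi) only asks $h_\gamma\ge 0$; in particular $B\equiv 0$ is allowed, in which case your source $S$ vanishes identically and the iterated sum gives nothing. Second, even granting some positivity of $S$ on a set, turning $\sum K^kS$ into a \emph{uniform} positive lower bound requires an irreducibility/spreading argument for $K$ that you have not supplied. The paper sidesteps both issues by running the inequality as a minimum principle rather than trying to build the lower bound constructively from the collision source.
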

\begin{proof} {\it Step 1 } In this step we verify $1/g\in L^\infty(\Omega\times 
V)$. For this let us mention that, by Lemma \ref{Lemma3.3}, we have $1\le\psi_g$ 
and $\sup\psi_g<\infty$ where the supremum is taken over $\{(r,v,t):(r,v)\in\Omega 
\times V,\ t\in [0,T_{\Omega}(r,v)]\}$. Recall also the definition of $\psi_g$ 
and (\ref{3.6}). 

We emphasize the following. Since for $(r,v)\in\partial\Omega\times V$ with 
$v\circ n(r)\ge 0$ we have $r^-\equiv r^-(r,v)\in\partial\Omega$ and $v\circ 
n(r^-)\le 0$, the boundary conditions (\ref{2.1}) say that 
\begin{eqnarray}\label{4.24} 
g(r^-,v)=J(r^-,\cdot)(g)\cdot M(r^-,v)\, . 
\end{eqnarray} 
For the next chain of equations and inequalities fix an $r\in\partial\Omega$ such 
that $J(r,\cdot)(g)<\infty$ and recall that $J(y,\cdot)(g)<\infty$ holds for a.e. 
$y\in\partial\Omega$ by Lemma \ref{Lemma4.3}. We obtain from (\ref{4.24})
\begin{align}\label{4.25} 
\begin{split} 
&\hspace{-.5cm}\int_{\{v\in V:v\circ n(r)\ge 0\}}v\circ n(r)\cdot 
J(r^-(r,v),\cdot)(g)\cdot M(r^-,v)\, dv \\ 
&\hspace{.5cm}=\int_{\{v\in V:v\circ n(r)\ge 0\}}v\circ n(r)\cdot 
g(r^-,v)\, dv \\ 
&\hspace{.5cm}=\int_{\{v\in V:v\circ n(r)\ge 0\}}v\circ n(r)\cdot 
g(r-T_{\Omega}(r,v)v,v)\, dv \\ 
&\hspace{.5cm}\le\sup\psi_g\cdot\int_{\{v\in V:v\circ n(r)\ge 0\}} 
v\circ n(r)\cdot g(r,v)\, dv \\ 
&\hspace{.5cm}=\sup\psi_g\cdot J(r,\cdot)(g)\vphantom{\int}  
\end{split} 
\end{align} 
where the second last line is a consequence of (\ref{3.6}) and the last 
equality sign holds according to the definition of $J$ in (i). Let us keep 
in mind that according to (ii), there exist constants $M_{\rm min},M_{\rm max} 
\in (0,\infty)$ such that $M_{\rm min}\le M(r,v)\le M_{\rm max}$. Instead of 
one fixed $r\in\partial\Omega$, let us now consider (\ref{4.25}) for a sequence 
$r_k\in\partial\Omega$, $k\in {\mathbb N}$, with $r_k\stack {k\to\infty}{\lra} 
r_\infty$ for some $r_\infty\in\partial\Omega$. 
\medskip

Assuming $J(r_k,\cdot)(g)\stack {k\to\infty}{\lra}0$ on the right-hand side of 
(\ref{4.25}), the left-hand side of (\ref{4.25}) implies that $J(y,\cdot)(g)=0$ 
for a.e. $y\in (\partial\Omega)_{r_\infty}$. This can be seen as follows. We take 
into consideration that for a.e. $y\in (\partial\Omega)_{r_\infty}$ there is a 
$k_0\equiv k_0(y)$ such that $y\in (\partial\Omega)_{r_k}$ for $k>k_0$ which means 
that $y\in\bigcap_{l\in {\mathbb N}}\bigcup_{k>l}(\partial\Omega)_{r_k}$. In 
addition, for the left-hand side of (\ref{4.25}) we get as in (\ref{4.12})
\begin{eqnarray*} 
&&\hspace{-.5cm}M_{\rm min}\cdot I_V(d+1)\int_{y\in(\partial\Omega)_{r_k}}J(y, 
\cdot )(g)\cdot\frac{(r_k-y)\circ n(r_k)\cdot n(y)\circ (y-r_k)}{|r_k-y|^{1+d}} 
\, dy \\ 
&&\hspace{.5cm}\le\int_{\{v\in V:v\circ n(r_k)\ge 0\}}v\circ n(r_k)\cdot J(r^- 
(r_k,v),\cdot)(g)\cdot M(r^-(r_k,v),v)\, dv \\ 
&&\hspace{-.0cm}\stack {k\to\infty}{\lra}0\vphantom{\int}
\end{eqnarray*}
where the weight functions 
\begin{eqnarray*} 
&&\hspace{-.5cm}y\in\partial\Omega\mapsto\sigma_{d}(r_k,y):=\frac{(r_k-y)\circ 
n(r_k)\cdot n(y)\circ (y-r_k)}{|r_k-y|^{1+d}}\cdot\chi_{(\partial\Omega)_{r_k}} 
(y)\, ,  
\end{eqnarray*}
$k\in {\mathbb N}\cup\{\infty\}$, are uniformly bounded on $\partial\Omega$ for 
$d=2,3$ by (\ref{3.13}) and therefore satisfy $\sigma_{d}(r_k,\cdot)\stack {k\to 
\infty}{\lra}\sigma_{d}(r_\infty,\cdot)$ in $L^1(\partial\Omega)$. We may now 
conclude that $\int_{y\in\partial\Omega}J(y,\cdot )(g)\cdot\sigma_{d}(r_\infty,
y)\, dy$ $=0$, i. e. $J(y,\cdot)(g)=0$ for a.e. $y\in (\partial\Omega)_{r_\infty}$. 
\medskip

Considering now the right-hand side of (\ref{4.25}) for $y\in (\partial\Omega 
)_{r_\infty}$ instead of $r$, it follows from the left-hand side of (\ref{4.25}) 
that even $J(y',\cdot)(g)=0$ a.e. on $y'\in (\partial\Omega)_y$. By iteration of 
the last conclusion we obtain $J(y,\cdot)(g)=0$ a.e. on $y\in\partial\Omega$. 

The latter would imply $g=0$ a.e. on $\partial\Omega\times V$, recall (\ref{2.1}) 
and (i). From (\ref{3.6}) we would a.e. on $\{(r,v)\in\partial\Omega\times V:v 
\circ n(r)\ge 0\}$ obtain 
\begin{eqnarray*}
0=\psi_g(r,v,T_{\Omega}(r,v))\cdot\int_0^{T_{\Omega}}\frac{\lambda Q^+(g,g) 
(r-sv,v)}{\psi_g(y,v,s)}\, ds\, . 
\end{eqnarray*} 
Since $\psi_g\ge 1$ this would say $Q^+(g,g)=0$ a.e. on $\Omega\times V$. 
Together with $g=0$ a.e. on $\partial\Omega\times V$, by (\ref{3.6}) this would 
yield $g=0$ a.e. on $\Omega\times V$. 
\medskip

Consequently, the above formulated assumption cannot hold, which means that 
\begin{eqnarray*}
\inf_{r\in\partial\Omega}J(r,\cdot)(g)>0\, . 
\end{eqnarray*} 
From here and (\ref{2.1}) as well as $0<M_{\rm min}\le M$ we may now conclude 
\begin{eqnarray}\label{4.26}
\inf\{g(y,v):(y,v)\in\partial\Omega\times V:v\circ n(y)\le 0\}>0.  
\end{eqnarray} 
On the other hand, relation (\ref{3.6}) implies for all $(r,v)\in\partial\Omega 
\times V$ with $v\circ n(r)\ge 0$ such that $g(r,v)<\infty$ and $y:=r^-(r,v)$ 
\begin{align}\label{4.27} 
\begin{split}
&\hspace{-.5cm}g(y,v)=g(r-T_{\Omega}v,v) \\ 
&\hspace{.5cm}=\psi_g(r,v,T_{\Omega})\left(-\int_0^{T_{\Omega}}\frac{\lambda 
Q^+(g,g)(r-sv,v)}{\psi_g(r,v,s)}\, ds+g(r,v)\right). 
\end{split} 
\end{align} 
Since $(y,v)\in\partial\Omega\times V$ in (\ref{4.27}) satisfies $v\circ n(y) 
\le 0$ and $\sup\psi_g<\infty$ we obtain from (\ref{4.26}) and (\ref{4.27}) 
\begin{eqnarray*}
\essinf_{(r,v)\in\partial\Omega\times V:v\circ n(r)\ge 0}\left\{-\int_0^{T_{ 
\Omega}(r,v)}\frac{\lambda Q^+(g,g)(r-sv,v)}{\psi_g(r,v,s)}\, ds+g(r,v)\right 
\}>0. 
\end{eqnarray*} 
Now (\ref{3.6}) implies positivity of $\essinf g$, i.e. $1/g\in L^\infty 
(\Omega)\times V$. 
\medskip

\noindent
{\it Step 2 } In this step we verify $g\in L^\infty(\Omega\times V)$. Recalling 
(\ref{3.6}) and $1\le\psi_g$ we may state that for a.e. $(y,v)\in\Omega\times V$ 
and $r=y^-(y,v)$ it holds that 
\begin{eqnarray*}
&&\hspace{-.5cm}g(y,v)\le g(r,v)+\int_0^{T_{\Omega}(y,v)}\lambda Q^+(g,g) 
(y-sv,v)\, ds\, . 
\end{eqnarray*} 
Applying now (\ref{2.1}) together with (ii) and (\ref{3.10}),  we verify  
\begin{align}\label{4.28}
\begin{split} 
&\hspace{-.5cm}g(y,v)\le M_{\rm max}J(r,\cdot)(g)+\frac{c_Q\lambda}{v_{min}} 
\int_0^{T_{\Omega}(y,v)}\|g(y-sv,\cdot)\|_{L^1(V)}|v|\, ds \\ 
&\hspace{.5cm}=M_{\rm max}J(r,\cdot)(g)+\frac{c_Q\lambda}{v_{min}}\int_0^{ 
T_{\Omega}(y,v/|v|)}\|g(y-sv/|v|,\cdot)\|_{L^1(V)}\, ds \\ 
&\hspace{.5cm}\le M_{\rm max}\cdot\esssup_{r\in\partial\Omega}J(r,\cdot)(g) 
\vphantom{\int} \\ 
&\hspace{1.0cm}+\frac{c_Q\lambda}{v_{min}}\, {\rm diam}(\Omega)\cdot\esssup_{ 
y\in\Omega}\|g(y,\cdot)\|_{L^1(V)}<\infty\quad\mbox{\rm for a.e. }(y,v) 
\in\Omega\times V, 
\end{split} 
\end{align}
the last inequality because of Lemma \ref{Lemma4.3}. We have completed the proof 
of the theorem. 
\end{proof} 

\bibliographystyle{siamplain}
% \bibliography{references}

\end{document}